\newtheorem{theorem}{Theorem}[section]
\newtheorem{lemma}[theorem]{Lemma}
\newtheorem{corollary}[theorem]{Corollary}
\newtheorem{proposition}[theorem]{Proposition}
\numberwithin{equation}{section}
\theoremstyle{definition}
\newtheorem{definition}[theorem]{Definition}
\newtheorem{remark}[theorem]{Remark}
\def\leq{\leqslant }
\def\geq{\geqslant}
\def\F{\mathcal{F}}
\def\H{\mathcal{H}}
\def\B{\mathcal{B}}
\begin{document}

\title[Non-conical strictly convex divisible sets]{Simplicity of Lyapunov spectra and boundaries of non-conical strictly convex divisible sets}

\author{Patrick Foulon}
\address{Patrick Foulon: Aix-Marseille Universit\'e, CNRS, UMR 822, 163 avenue de Luminy, 
13288 Marseille cedex 9, France.}
\email{foulon@cirm-math.fr}
\urladdr{https://test.i2m.univ-amu.fr/perso/patrick.foulon/} 

\author{Pascal Hubert}
\address{Pascal Hubert: Aix Marseille Universit\'e, CNRS, Centrale Marseille, Institut de Math\'ematiques de Marseille, I2M - UMR 7373, 13453 Marseille, France.}
\email{pascal.hubert@univ-amu.fr}

\author{Carlos Matheus}
\address{Carlos Matheus: Centre de Math\'ematiques Laurent Schwartz, CNRS (UMR 7640), \'Ecole Polytechnique, 91128 Palaiseau, France.}
\email{carlos.matheus@math.cnrs.fr}
\urladdr{http://carlos.matheus.perso.math.cnrs.fr} 

\date{\today}

\begin{abstract}
Let $\Omega$ be a strictly convex divisible subset of the $n$-dimensional real projective space which is not an ellipsoid. Even though $\partial\Omega$ is not $C^2$, Benoist showed that it is $C^{1+\alpha}$ for some $\alpha>0$, and Crampon established that $\partial\Omega$ actually possesses a sort of anisotropic H\"older regularity -- described by a list $\alpha_1\leq\dots\leq\alpha_{n-1}$ of positive real numbers -- at almost all of its points. In this article, we show that $\partial\Omega$ is maximally anisotropic in the sense that this list of approximate regularities of $\partial\Omega$ does not contain repetitions. This result is a consequence of the simplicity of the Lyapunov spectrum of the Hilbert geodesic flow for every equilibrium measure associated to a H\"older potential.
\end{abstract}
\maketitle


\section{Introduction} 

Let $P(\mathbb{R}^{n+1})$ be the real projective space. The group of projective transformations of $P(\mathbb{R}^{n+1})$ is  $PSL(n+1,\mathbb{R})$. A strictly convex open subset $\Omega\subset P(\mathbb{R}^{n+1})$ is \emph{divisible} whenever $\Omega$ is preserved by a torsionless discrete subgroup $\Gamma\subset PSL(n+1,\mathbb{R})$ such that the quotient manifold $M^n=\Omega/\Gamma$ is compact. If $\Omega$ is an ellipsoid, then the quotients $\Omega/\Gamma$ are actually hyperbolic manifolds. On the other hand, a non-conical strictly convex divisible $\Omega$ is necessarily an intricate object: indeed, even though Benoist \cite{B1} proved that the boundary of $\Omega$ is $C^{1+\alpha}$ for some $\alpha > 0$, it is known that $\partial\Omega$ is not $C^2$. Moreover, the geodesic flow $\varphi_t$ for the Hilbert metric is an Anosov flow (see \cite{B1}). In this note, we prove: 

\begin{theorem}\label{th-main} If $\Omega$ is not an ellipsoid, then the Lyapunov spectrum of the geodesic flow $\varphi_t$ with respect to any equilibrium state of $\varphi_t$ derived from a H\"older continuous potential is simple.
\end{theorem}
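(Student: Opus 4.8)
\emph{Plan of proof.} Write $M=\Omega/\Gamma$ and let $\varphi_t$ denote the Hilbert geodesic flow on $T^1M$; recall from \cite{B1} that it is a transitive Anosov flow, with invariant splitting $TT^1M=E^s\oplus E^0\oplus E^u$ where $E^0$ is the flow direction and $\dim E^s=\dim E^u=n-1$. I would fix a H\"older potential, let $\mu$ be the associated equilibrium state, and recall (Bowen--Ruelle, Leplaideur) that $\mu$ has local product structure along the strong stable and strong unstable foliations. Since $E^0$ carries the zero exponent and the flip $v\mapsto-v$ conjugates $\varphi_t$ with $\varphi_{-t}$ while exchanging $E^s$ and $E^u$, it is enough to show that the restricted derivative cocycle $A_t:=D\varphi_t|_{E^u}$, together with each exterior power $\bigwedge^k\!A_t$, has a simple top Lyapunov exponent; that yields $n-1$ distinct exponents on $E^u$ and, by reversibility, on $E^s$.

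The main tool will be the Avila--Viana simplicity criterion for a H\"older linear cocycle over a hyperbolic system carrying a measure of local product structure: the top exponent is simple as soon as the cocycle admits invariant stable and unstable holonomies and is \emph{pinching} and \emph{twisting} at some periodic orbit. The subtlety is that $\partial\Omega$ is not $C^2$, so $E^u$ and $A_t$ are merely H\"older and fiber bunching is not available to manufacture holonomies. I would get around this using projective geometry: $\Omega$ carries the flat $\mathrm{PGL}(n+1,\mathbb{R})$-structure, hence there is a smooth flat $\mathbb{R}^{n+1}$-bundle over $T^1M$ in which, following Crampon, $E^u$ embeds as (a piece of) a sub-bundle on which the flat parallel transport restricts to canonical holonomies for $A_t$; over the periodic orbit $\mathcal O_\gamma$ associated with $\gamma\in\Gamma$ this transport is conjugate to the linear action of $\gamma$ on $\mathbb{R}^{n+1}$. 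Because the flat bundle is smooth, these holonomies exist, are H\"older, and are compatible with the local product structure of $\mu$.

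To verify pinching and twisting I would invoke Benoist's dichotomy: as $\Omega$ is not an ellipsoid, $\Gamma$ is Zariski-dense in $\mathrm{PGL}(n+1,\mathbb{R})$. By the Abels--Margulis--Soifer lemma (or Benoist's construction of $\mathbb{R}$-regular elements), a Zariski-dense subsemigroup contains $g$ with all eigenvalues real, positive and of pairwise distinct moduli; strict convexity forces $g$ biproximal, so $g$ is represented by a periodic orbit $\mathcal O_g$ of $\varphi_t$, and by Crampon's formula the exponents of $A_t$ (resp.\ of $\bigwedge^k\!A_t$) along $\mathcal O_g$ are affine functions of the numbers $\log|\lambda_i(g)|$ (resp.\ of their partial sums $\sum_{i\in I}\log|\lambda_i(g)|$), hence pairwise distinct: pinching holds for every exterior power. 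For twisting I would produce a loop through $\mathcal O_g$ whose holonomy carries the stable flag of $g$ into general position with the unstable flag; this follows because the $\mathrm{PGL}(n+1,\mathbb{R})$-action on products of Grassmannians of $\mathbb{R}^{n+1}$ admits no proper closed invariant subset linking two transverse flags, so Zariski density of $\Gamma$ --- via the identification of periodic holonomies with the linear $\Gamma$-action --- supplies the required element. The Avila--Viana criterion then gives a simple top exponent for each $\bigwedge^k\!A_t$, whence simplicity of the full Lyapunov spectrum of $\varphi_t$.

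The hard part will be the bridge between the dynamical cocycle $A_t$ and the algebraic $\mathrm{PGL}(n+1,\mathbb{R})$-picture: because of the low regularity of $\partial\Omega$ one cannot work with $A_t$ directly, so everything rests on realizing $E^u$ inside the smooth flat projective bundle, checking that the resulting holonomies really do satisfy the continuity, cocycle, flow-invariance and product-structure hypotheses of the simplicity criterion, and confirming that their reduction over periodic orbits is faithful enough to transport Benoist's Zariski density into pinching and twisting. One also has to note that $\mu$, being the equilibrium state of a H\"older potential for a transitive Anosov flow, is not carried by a single periodic orbit and gives full measure to the set where Oseledets' theorem and the holonomy estimates hold. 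Once these points are in place, the rest is the classical Zariski-density route to simplicity.
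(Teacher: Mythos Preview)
Your plan has all the right pieces --- Zariski density of $\Gamma$ (Benoist), Abels--Margulis--Soifer for twisting, and a pinching/twisting simplicity criterion over a measure with local product structure --- and these are exactly the ingredients the paper uses. The difference lies in how the ``hard part'' you flag is handled.

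Instead of applying the criterion to $A_t=D\varphi_t|_{E^u}$ and manufacturing holonomies for it via an embedding into the flat bundle, the paper works directly with the $(n{+}1)$-dimensional \emph{locally constant} cocycle $\rho$ given by the holonomy representation $\pi_1(M)\to\Gamma<SL(n{+}1,\mathbb{R})$. Locally constant cocycles have trivial (identity) holonomies, so the low-regularity issue you worry about simply evaporates: after coding the Anosov flow by a subshift of finite type, the Bonatti--Viana criterion applies to $(\sigma,\rho)$ with no further analytic work. Pinching comes from a loxodromic element of $\Gamma$ (Goldsheid--Margulis via Zariski density), and twisting from Abels--Margulis--Soifer applied simultaneously in each exterior power $\Lambda^k\mathbb{R}^{n+1}$.

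The bridge back to the geodesic flow is then purely numerical rather than geometric. The paper proves (Lemma~\ref{lem:link cocycles}) that the parallel-transport exponents $\eta_i$ and the $\rho$-exponents $\ell_i$ satisfy $\eta_i=-1+2(\ell_0-\ell_i)/(\ell_0-\ell_n)$, first on periodic orbits via Crampon's formula and then for every ergodic measure by Kalinin's approximation of Lyapunov spectra by periodic data. Since the geodesic-flow exponents on $E^{\pm}$ are $\pm 1+\eta_i$ with $|\eta_i|<1$, simplicity of the $\rho$-spectrum yields simplicity of the full spectrum. Note in particular that no flip-symmetry argument is needed --- and in fact your reduction via the flip would not go through as written, since the equilibrium state of a general H\"older potential is not flip-invariant. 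The paper's route, by contrast, treats $E^+$ and $E^-$ uniformly through the single parallel-transport cocycle.
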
 

A consequence of this result says that the boundary of a non-conical strictly convex divisible set has a complicated infinitesimal behavior at almost all of its points. 

\begin{corollary}\label{t.A} The boundary of a strictly convex divisible set in $P(\mathbb{R}^{n+1})$ which is not an ellipsoid bends infinitesimally according to $n-1$ distinct rates at Lebesgue almost point. More concretely, let $\Omega\subset P(\mathbb{R}^{n+1})$ be a strictly convex divisible set. If $\Omega$ is not an ellipsoid, then there are $n-1$ real numbers $0<\alpha_1<\dots<\alpha_{n-1}$ and a subset $\mathcal{R}\subset\partial\Omega$ of full Lebesgue measure on $\partial\Omega$ with the following property. For each $y\in\mathcal{R}$, one can find a decomposition $T_y\partial\Omega = F_1\oplus\dots\oplus F_{n-1}$ such that the germ of function $f:T_y\partial\Omega\to(\mathbb{R},0)$ whose graph $\{u+f(u)n(y)\}$ (where $n(y)$ is a normal vector to $\partial\Omega$ at $y$) locally describes $\partial\Omega$ satisfies 
$$\lim\limits_{h\to 0}\frac{\log((f(h v_i)+f(-h v_i))/2)}{\log |h|} = \alpha_i$$ 
for any $v_i\in F_i\setminus\{0\}$, $1\leq i\leq n-1$. 
\end{corollary}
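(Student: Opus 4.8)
The plan is to feed Theorem~\ref{th-main} into the dictionary, due to Crampon, between the Lyapunov spectrum of the Hilbert geodesic flow and the pointwise regularity of $\partial\Omega$. Recall that the universal cover of $M$ is $\Omega$, that a point $w\in T^1\Omega$ determines a bi-infinite geodesic with endpoints $\xi^\pm(w)\in\partial\Omega$, and that the strong stable leaf through $w$ (the set of $w'$ with $\xi^+(w')=\xi^+(w)$) is canonically identified, via its projection to the horosphere based at $\xi^+(w)$, with a neighbourhood of the origin in $T_{\xi^+(w)}\partial\Omega$. Crampon showed that along an Oseledets-regular orbit the stable Oseledets filtration at $w$ projects under this identification onto a splitting $T_{\xi^+(w)}\partial\Omega=F_1\oplus\dots\oplus F_{n-1}$, and that the corresponding stable Lyapunov exponents $\chi_1\le\dots\le\chi_{n-1}<0$ of $\varphi_t$ at $w$ are tied to the flatness exponents of $\partial\Omega$ at $\xi^+(w)$ — that is, to the numbers $\alpha_i=\lim_{h\to0}\log((f(hv_i)+f(-hv_i))/2)/\log|h|$ in the statement — by an explicit strictly monotone change of variables $\chi\mapsto\alpha(\chi)$. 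In particular the list $(\alpha_1,\dots,\alpha_{n-1})$ has a repetition if and only if $(\chi_1,\dots,\chi_{n-1})$ does. This reduces Corollary~\ref{t.A} to exhibiting a single \emph{ergodic} equilibrium state of a H\"older potential whose set of Oseledets-regular orbits projects, through $w\mapsto\xi^+(w)$, onto a subset of $\partial\Omega$ of full Lebesgue measure.

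For that measure I would take the Liouville probability measure $m$ of the Hilbert geodesic flow: being the SRB measure of the Anosov flow $\varphi_t$, $m$ is the equilibrium state of the geometric potential $-\log\det(D\varphi_1|_{E^u})$, which is H\"older, so Theorem~\ref{th-main} applies to $m$ and its Lyapunov spectrum is simple. It then remains to check that, in the Hopf parametrization $T^1\Omega\cong(\partial\Omega\times\partial\Omega\setminus\Delta)\times\mathbb{R}$, the $\xi^+$-marginal of the $\Gamma$-invariant lift of $m$ lies in the Lebesgue class of $\partial\Omega$; this marginal is the visual measure seen from a point of $\Omega$, and its equivalence with the Lebesgue class of the $C^1$ hypersurface $\partial\Omega$ follows from Benoist's $C^{1+\alpha}$ regularity together with the standard comparison between the Hilbert and the ambient Euclidean metric near $\partial\Omega$. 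Granting this, the set $\mathcal{R}\subset\partial\Omega$ of forward endpoints of $m$-Oseledets-regular orbits has full Lebesgue measure; since $m$ is ergodic the $\chi_i$, hence the $\alpha_i$, are constant on $\mathcal{R}$; and they are pairwise distinct by Theorem~\ref{th-main}. Combined with Crampon's description of the splitting $F_1\oplus\dots\oplus F_{n-1}$ and of the limits defining the $\alpha_i$, this is exactly the assertion of the Corollary (that $\alpha_i<2$, distinguishing $\Omega$ from an ellipsoid, is Benoist's pinching and is not needed here).

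One may also bypass the Liouville measure and argue directly with the Lebesgue conformal density on $\partial\Omega$: since $\partial\Omega$ is $C^{1+\alpha}$, this density is $\Gamma$-quasi-invariant with H\"older Radon--Nikodym cocycle, so by the Ledrappier--Paulin--Pollicott--Schapira correspondence between such cocycles and equilibrium states it produces a flow-invariant probability measure on $T^1M$ which is the equilibrium state of a H\"older potential and whose $\xi^\pm$-marginals are, by construction, in the Lebesgue class; Theorem~\ref{th-main} then applies verbatim and the conclusion follows as above.

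The main obstacle, I expect, is not the dynamics — Theorem~\ref{th-main} does the heavy lifting — but the bookkeeping at the interface with Crampon's and Benoist's work: one must (i) quote Crampon's correspondence in a form precise enough to guarantee that the splitting $F_1\oplus\dots\oplus F_{n-1}$ and the limits defining the $\alpha_i$ are genuinely governed by the stable Oseledets data at $\xi^+(w)$, and (ii) justify, using only the $C^{1+\alpha}$ regularity of $\partial\Omega$, that some H\"older-potential equilibrium state has a $\partial\Omega$-marginal in the Lebesgue class — equivalently, that the forward endpoints of non-Oseledets-regular orbits form a Lebesgue-null subset of $\partial\Omega$. Neither step is deep, but (ii) is the one place where a careful argument, rather than a bare citation, is likely to be required.
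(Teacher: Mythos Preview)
Your strategy is exactly the paper's: apply Theorem~\ref{th-main} to the SRB measure, invoke Crampon's correspondence (\S\ref{ss.Crampon}) between Lyapunov exponents and boundary regularity exponents, and use that the Oseledets-regular set for SRB projects under the endpoint map onto a full-Lebesgue subset of $\partial\Omega$. Two specific points in your write-up need correction, however. First, there is no Liouville measure here: as the paper recalls (following Benoist), when $\Omega$ is not an ellipsoid the Hilbert flow admits \emph{no} invariant probability absolutely continuous with respect to Lebesgue, so the SRB measure is genuinely singular and should not be called ``Liouville''. Second, your justification that the $\xi^+$-factor of SRB lies in the Lebesgue class --- namely, that it is ``the visual measure seen from a point of $\Omega$'' --- is not right; that description pertains to Patterson--Sullivan / Bowen--Margulis, not to SRB. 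The correct (and shorter) reason, which is what the paper uses in \S\ref{ss.SRB-BM}, is intrinsic to the SRB property: in the Hopf product decomposition $\nu^s_{SRB}\otimes\nu^u_{SRB}$ the factor $\nu^u_{SRB}$ is absolutely continuous with respect to Lebesgue on $\partial\Omega$ because SRB measures have, by definition, absolutely continuous conditionals along unstable manifolds. Once these two points are fixed, your argument and the paper's coincide; your alternative route via a Lebesgue conformal density and the Ledrappier--Paulin--Pollicott--Schapira correspondence is a legitimate variant, but it ultimately reproduces the same equilibrium state.
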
 

The proof of these results relies on the features of a locally constant cocycle over an Anosov geodesic flow. More concretely, after reviewing the basic theory of convex divisible sets in Section \ref{s.preliminaries} below, we recall in Section \ref{s.Foulon} the relation  between the real numbers $\alpha_i$ in the statement of Theorem \ref{t.A} and the Lyapunov exponents of the so-called Foulon's parallel transport cocycle, and we complete in Section \ref{s.main} the proof of Theorem \ref{th-main} and Corollary \ref{t.A} by establishing the simplicity of the Lyapunov exponents of Foulon's cocycle associated to non-conical strictly convex divisible sets. The main idea in the proof is that the Hilbert flow is locally projectively  flat. It implies that the analysis is reduced to the action of the monodromy group $\Gamma$ acting along the geodesics. 

\subsection*{Acknowledgments} The third author is grateful to CIRM's staff for their immense hospitality during the initial discussions at the origin of this paper. 

\section{Preliminaries}\label{s.preliminaries} 

\subsection{Flat real projective structures} Let $M$ be a closed $n$-dimensional manifold equipped with a flat real projective structure determined by an atlas of charts with values in $P(\mathbb{R}^{n+1})$ whose changes of coordinates belong to $SL(n+1,\mathbb{R})$. 

\begin{figure}[h!]
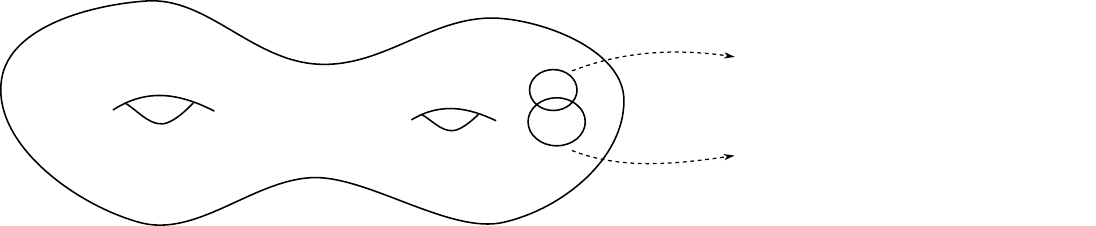\caption{Real projective structures.}
\end{figure}

In this context, we have a holonomy representation $\rho:\pi_1(M)\to SL(n+1,\mathbb{R})$ of the fundamental group $\pi_1(M)$ of $M$, and a developing map $D:\widetilde{M}\to P(\mathbb{R}^{n+1})$ from an universal cover $\widetilde{M}$ of $M$ to the projective space $P(\mathbb{R}^{n+1})$. 

The image $\Omega=D(\widetilde{M})$ of a developing map is an open subset of $P(\mathbb{R}^{n+1})$ which is divisible by the torsionless discrete subgroup $\Gamma=\rho(\pi_1(M))$ of $SL(n+1,\mathbb{R})$ in the sense that the quotient $\Omega/\Gamma\simeq M$ is compact. Note that the natural actions of $\Gamma$ preserve the cone $\mathcal{C}\subset\mathbb{R}^{n+1}$ spanned by $\Omega$ and the boundary $\partial\Omega\subset P(\mathbb{R}^{n+1})$ of $\Omega$. 

We say that a flat real projective structure of $M$ is convex whenever  $\Omega$ is convex. Furthermore, a flat real projective structure of $M$ is properly convex, resp. strictly convex, whenever $\Omega$ is a convex, resp. strictly convex, subset of a compact convex set included in an affine chart of $P(\mathbb{R}^{n+1})$. 

\subsection{Hilbert metrics and Anosov flows} The Hilbert metric of a properly convex open subset $\Omega\subset P(\mathbb{R}^{n+1})$ not containing a line can be defined in terms of cross-ratios as follows. Given distinct points $a, b\in\Omega$, the oriented line $\ell$ from $a$ to $b$ intersects $\partial\Omega$ at two points $a^-, b^+\in\partial\Omega$ labelled in such a way $a^-$, $a$, $b$ and $b^+$ appear in this order in $\ell$. The Hilbert distance $d_{\Omega}(a,b)$ between $a$ and $b$ is 
$$d_{\Omega}(a,b) = \frac{1}{2}\log\left(\frac{|b^+-a|}{|b^+-b|}\cdot\frac{|b-a^-|}{|a-a^-|}\right),$$ 
where $|.|$ is a Euclidean norm in an affine chart containing $\Omega$.  

The intersections of $\Omega$ with projective lines are geodesics of the Hilbert metric $d_{\Omega}$. Moreover, these straight line segments account for all geodesics of $d_{\Omega}$ whenever $\Omega$ is strictly convex: in particular, there is an unique geodesic of the Hilbert metric passing through a given pair of distinct points of $\Omega$ when $\Omega$ is strictly convex. 

\begin{remark} If $\Omega\subset P(\mathbb{R}^{n+1})$ is a conic, i.e., an ellipsoid, then its Hilbert metric coincides with the usual hyperbolic metric.   
\end{remark} 

The Hilbert metric $d_{\Omega}$ is invariant under the projective actions of the elements $g\in SL(n+1,\mathbb{R})$ preserving the cone $\mathcal{C}$ spanned by $\Omega$. In particular, a strictly convex, flat, real projective structure on a closed manifold $M$ allows to equip it with a Hilbert metric. 

In general, the Hilbert metric $d_{\Omega}$ is a Finsler but not Riemannian. More concretely, Benoist \cite{B1} showed that a properly convex open $\Omega\subset P(\mathbb{R}^{n+1})$ which is divisible by a torsionless discrete subgroup $\Gamma\subset SL(n+1,\mathbb{R})$ is strictly convex if and only if $\partial\Omega$ is $C^{1+\kappa}$ for some $\kappa > 0$. In particular, the Hilbert metric $d_{\Omega}$ of a strictly convex, open, divisible subset $\Omega\subset P(\mathbb{R}^{n+1})$ comes from the Finsler structure associate to the continuous family of norms 
\begin{equation}
\|v\|_x = \lim_{t \rightarrow 0} \frac{1}{t } d_{\Omega}(x+tv, x) = \frac{1}{2} \left(\frac{1}{|x^+-x|} + \frac{1}{|x-x^-|}\right) |v| 
\end{equation}
on the tangent spaces $T_x\Omega$ to $x\in\Omega$, where $|.|$ is the Euclidean norm and $x^-$ and $x^+$ are the intersection points of $\partial \Omega$ with the line passing through $x$ with direction $v$. On the other hand, if $\Omega$ is not an ellipsoid, then $\partial \Omega$ is not $C^2$ (in fact, the set of points where $\partial \Omega$ is $C^2$ has zero Lebesgue measure) and the Finsler structure above is not Riemannian. Note that the Finsler structure of $\Omega$ is $\Gamma$-equivariant, i.e., for any $g\in \Gamma$, one has $\|v\|_x = \|g.v\|_{g.x}$. In particular, $M$ has a Finsler structure: indeed, given $x\in M$ and $v\in T_xM$, we can define $F(v)\doteq\|\widetilde{v}\|_{\widetilde{x}}$, where $(\widetilde{x},\widetilde{v})$ is any lift of $(x,v)$ to $T\Omega$.  

If $\Omega$ is a strictly convex open subset of $P(\mathbb{R}^{n+1})$, then the geodesics of its Hilbert metric $d_{\Omega}$ define a $C^{\infty}$ one-dimensional foliation. Since this foliation is invariant under the action of any element of $SL(n+1,\mathbb{R})$ preserving $\Omega$, we see that a strictly convex, flat, real projective structure on a closed manifold $M$ leads to a natural $C^{\infty}$ one-dimensional foliation. As it was proved by Benoist \cite{B1}, this foliation consists of the orbits of a topologically mixing Anosov flow $(\varphi_t)_{t\in\mathbb{R}}$ called the \emph{Hilbert flow}, namely, the geodesic flow of the Hilbert metric of $M$. Here, it is worth to notice that this (Anosov) geodesic flow has the same regularity of $\partial\Omega$, that is, $C^{1+\kappa}$ for some $\kappa>0$. Actually, as we recall below, there is an intimate relationship between the dynamics of $(\varphi_t)_{t\in\mathbb{R}}$ and the geometry of $\partial\Omega$. 

\subsection{Lyapunov exponents and geometry of the boundary}\label{ss.Crampon} Let $M$ be a closed manifold with a flat real projective structure induced by a strictly convex open subset $\Omega\subset P(\mathbb{R}^{n+1})$ which is divisible by $\Gamma\subset SL(n+1,\mathbb{R})$. The geodesic flow $\varphi_t$ with respect to Hilbert metric on the homogenous bundle $HM = (TM\setminus\{0\})/\mathbb{R}_+^*$ is Anosov, i.e., its differential preserves a decomposition 
$$THM = E^s\oplus \mathbb{R}\cdot X\oplus E^u$$ 
where the vectors in $E^s$ are uniformly contracted in the future, $X$ is the generator of the geodesic flow, and the vectors in $E^u$ are uniformly contracted in the past. In general, this decomposition can be further refined at Oseledets regular points $(x,v)\in HM$: by definition, this means that we can find Lyapunov exponents $\chi_1^u(x,v)>\dots>\chi_p^u(x,v)>0 > \chi_1^s(x,v) > \dots > \chi_q^s(x,v)$ and two decompositions $E^s = E^s_1(x,v)\oplus \dots\oplus E^s_q(x,v)$ and $E^u = E^u_1(x,v)\oplus \dots\oplus E^u_p(x,v)$ in Oseledets subspaces such that 
$$\lim\limits_{t\to\pm\infty}\frac{1}{t}\log\|D\varphi_t(x,v) Z_k^u\|_{\varphi_t(x,v)} = \chi_k^u(x,v)$$ 
and 
$$\lim\limits_{t\to\pm\infty}\frac{1}{t}\log\|D\varphi_t(x,v) Z_l^s\|_{\varphi_t(x,v)} = \chi_l^s(x,v)$$ 
for all $Z_k^u\in E_k^u(x,v)\setminus\{0\}$, $Z_l^s\in E_l^s(x,v)\setminus\{0\}$, $1\leq k\leq p$, $1\leq l\leq q$. By Oseledets theorem, the set of Oseledets regular points has full mass with respect to any $(\varphi_t)_{t\in\mathbb{R}}$-invariant probability measure. 

Interestingly enough, Crampon \cite{Cr1} proved that the positive Lyapunov exponents of Oseledets regular points are related to the infinitesimal geometry of $\partial\Omega$. More precisely, if $\chi_1(x,v)>\dots>\chi_p(x,v)>0$ are the positive Lyapunov exponents of the Oseledets regular point $(x,v)\in THM$ and $x^+\in\partial\Omega$ the intersection between $\partial\Omega$ and the oriented line determined any lift of $(x,v)$ to $T\Omega$, then we can find a decomposition $T_{x^+}\partial\Omega=F_1\oplus\dots\oplus F_p$ (by ``projection of Oseledets subspaces'') such that the germ of function $f:(T_{x^+}\Omega,0)\to (\mathbb{R},0)$ whose graph $\{u+f(u)n(x^+)\}$ (where $n(x^+)$ is a normal vector to $\partial\Omega$ at $x^+$) describes $\partial\Omega$ nearby $x^+$ has the following property:  
$$\alpha((x,v),w_i):=\lim\limits_{h\to 0}\frac{\log((f(h w_i)+f(-h w_i))/2)}{\log|h|} = 2/\chi_i(x,v)$$ 
for all $w_i\in F_i\setminus\{0\}$, $1\leq i\leq p$. In other terms, the infinitesimal bending of $\partial\Omega$ at $x^+$ in different directions is dictated by the list of positive Lyapunov exponents along any geodesic ray converging to $x^+$ in the future. 

\subsection{Hopf parametrization of equilibrium states}\label{ss.SRB-BM} Let $m$ be an invariant probability measure of the geodesic (Anosov) flow $(\varphi_t)_{t\in\mathbb{R}}$ of the Hilbert metric on a flat real projective manifold $M$ attached to a strictly convex open subset $\Omega\subset P(\mathbb{R}^{n+1})$ which is divisible by $\Gamma\subset SL(n+1,\mathbb{R})$. It is possible to show that $m$ induces a $\Gamma$-invariant Radon measure on the space $\partial^{(2)}\Omega=\{(x^-,x^+)\in\partial\Omega\times\partial\Omega: x^-\neq x^+\}$ of oriented geodesics of the Hilbert metric of $\Omega$. Furthermore, one can prove that any equilibrium state $\mu_f$ of $(\varphi_t)_{t\in\mathbb{R}}$ with respect to a H\"older potential $f:HM\to\mathbb{R}$ corresponds to a $\Gamma$-invariant Radon measure with a \emph{product} structure $\nu^s\otimes \nu^u$ on $\partial^{(2)}\Omega$. Among the most well-known equilibrium states, one finds the Bowen--Margulis measure $\mu_{BM}$ associated to the trivial potential $f\equiv 0$ and the Sinai--Ruelle--Bowen (SRB) measure $\mu_{SRB}$ associated to the potential $f=\frac{d}{dt}(\log\det D\varphi_t|_{E^u})|_{t=0}$. For later reference, let us recall that the product measure $\nu^s_{SRB}\otimes\nu^u_{SRB}$ on $\partial^{(2)}\Omega$ associated to $\mu_{SRB}$ has the property that $\nu^u_{SRB}$ is absolutely continuous with respect to the Lebesgue measure on $\partial\Omega$.  

\section{Tangent dynamics to geodesic flows of convex divisible sets}\label{s.Foulon} 

\subsection{Parallel transport for  a second order differential equation }
The generator $X =\frac{d}{dt} \varphi_t\vert _{t=0}$ of the Hilbert flow is a second order differential equation on the manifold $M$, that is, a nowhere vanishing vector field on the bundle $ \pi : HM\rightarrow M$ such that, for any $ x\in M, [v] \in H_xM$, we have $ [ d\pi(X([v])]= [v] $. When such a second order differential equation is sufficiently smooth, one can attach to it a geometric structure (extending some of the classical tools known for Riemannian geodesic flows) which is well-adapted to the dynamics of the induced flow: cf. \cite{Fo1}. In the sequel, we briefly recall some of  these tools in full generality in the smooth case. 

In general, there exists a natural decomposition  
\begin{equation}\label{e.vert-hor-decomp}
THM = VHM \oplus \mathbb{R}\cdot X\oplus h_XHM
\end{equation}
with the corresponding projections
$$
Id_{THM} = p^X_v + p^X_X + p^X_H,  
$$
where $VHM= \ker d\pi$ is the vertical distribution and  $ h_XHM$ is the so-called horizontal distribution (defined below).\footnote{In the Riemannian case, one usually consider $ hor_X= \mathbb{R}\cdot X\oplus h_XHM$ as horizontal distribution.} To understand this splitting together with associated invariants, the key ingredient is the verticality lemma for second order differential equation (cf. \cite[Thm. II.1]{Fo1}).  
\begin{lemma} Let  $X$ be a second order differential equation on $HM$. If $Y_1 , Y_2$ are vertical vector fields defined on a neighborhood of $z\in HM$ such that 
$$a.X(z) + Y_1(z) + b .[X,Y_2(z)]= 0$$ 
for two real numbers $a$ and $b$, then 
$$ b.Y_2(z) =0, \quad  a=0 .$$
In particular, the Lie bracket of a non vertical vector field with a second order differential equation is not in $VHM \oplus \mathbb{R}\cdot X$ at non singular points.
\end {lemma}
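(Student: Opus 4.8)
The plan is to establish the identity by a short computation in local coordinates on $HM$ adapted to the fibration $\pi\colon HM\to M$; everything hinges on the fact that being a second order differential equation rigidly prescribes the ``horizontal part'' of $X$, and that is all one uses.

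Fix $z\in HM$ over $x_0=\pi(z)$, with representative direction $v_0\in T_{x_0}M$. Choosing coordinates $x^1,\dots,x^n$ on $M$ near $x_0$ so that the last component of $v_0$ in the induced coordinates is nonzero, I would work with the coordinates $(x^1,\dots,x^n,u^1,\dots,u^{n-1})$ on $HM$, where $u^\alpha=v^\alpha/v^n$ are affine coordinates on the fibre of $HM$; thus $VHM=\ker d\pi$ is spanned by $\partial_{u^1},\dots,\partial_{u^{n-1}}$. Writing $\tau=(u^1,\dots,u^{n-1},1)$ for the tautological direction, the defining condition $[d\pi(X(z))]=z$ forces
$$X=\lambda\Big(\sum_{\alpha=1}^{n-1}u^\alpha\,\partial_{x^\alpha}+\partial_{x^n}\Big)+\sum_{\alpha=1}^{n-1}G^\alpha\,\partial_{u^\alpha}$$
for some functions $\lambda=\lambda(x,u)$, nowhere vanishing (this nonvanishing is exactly the content of the equality of rays in $[d\pi(X(z))]=z$), and $G^\alpha=G^\alpha(x,u)$. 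For a vertical field $Y_2=\sum_\beta Y_2^\beta\,\partial_{u^\beta}$, I would then record the $\partial_{x^i}$-coefficients of $[X,Y_2]$ — the only ones that will matter — which, via $[V,W]^k=V(W^k)-W(V^k)$, come out to be $-\sigma$ along $\partial_{x^n}$ and $-u^\alpha\sigma-\lambda\,Y_2^\alpha$ along $\partial_{x^\alpha}$, where $\sigma:=\sum_\beta Y_2^\beta\,\partial_{u^\beta}\lambda$.

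Next I would feed these into $a\,X(z)+Y_1(z)+b\,[X,Y_2](z)=0$ and compare only the $\partial_{x^i}$-components; since $Y_1$ is vertical it contributes nothing there. The $\partial_{x^n}$-equation reads $a\,\lambda(z)=b\,\sigma(z)$, and plugging this into each $\partial_{x^\alpha}$-equation kills the $u^\alpha$-term and leaves $b\,\lambda(z)\,Y_2^\alpha(z)=0$; since $\lambda(z)\ne0$ this gives $b\,Y_2(z)=0$. For $a=0$: if $b=0$ the relation $a\lambda(z)=b\sigma(z)$ already yields $a=0$; if $b\ne0$ then $Y_2(z)=0$, hence $\sigma(z)=0$, and $a\lambda(z)=0$ again forces $a=0$. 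Finally, the ``in particular'' is the contrapositive of the case $b=1$: were $Y$ a vertical field with $Y(z)\ne0$ and $[X,Y](z)=\mu X(z)+Z(z)$ with $Z$ vertical, the identity applied with $(a,b,Y_1,Y_2)=(-\mu,1,-Z,Y)$ would force $Y(z)=0$, a contradiction.

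I do not expect a real obstacle here. The single idea, already visible above, is that the second order condition forces the $\partial_x$-part of $X$ to be $\lambda$ times the tautological direction $\tau$, so that the $\partial_x$-part of $[X,Y_2]$ splits as a multiple of $\tau$ plus the genuine term $-\lambda\sum_\alpha Y_2^\alpha\,\partial_{x^\alpha}$, and the ``$u^\alpha$-weighted'' cancellation then isolates $\lambda\,Y_2^\alpha$. The only points requiring a little care are to run the computation on the projectivized bundle $HM$ (whose fibre is $(n-1)$-dimensional, so that no radial/Liouville vertical direction intervenes) and to record that $\lambda$ is nowhere vanishing.
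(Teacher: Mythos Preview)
The paper does not actually supply a proof of this lemma; it merely quotes it with the reference \cite[Thm.~II.1]{Fo1}. Your local-coordinate argument is correct and is in fact the standard way to establish the verticality lemma: the second-order condition forces the $\partial_{x}$-part of $X$ to be $\lambda$ times the tautological direction $(u^1,\dots,u^{n-1},1)$, so reading off the $\partial_{x^i}$-components of $aX+Y_1+b[X,Y_2]=0$ yields first $a\lambda=b\sigma$ and then, after the cancellation you describe, $b\lambda Y_2^\alpha=0$; the conclusions follow from $\lambda(z)\neq 0$. Your reading of the ``in particular'' clause as the contrapositive of the case $b=1$ applied to a nonvanishing vertical $Y$ is the intended one---the phrase ``non vertical vector field'' in the statement is almost certainly a slip for ``nonzero vertical vector field''.
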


This lemma permits to introduce the notion of vertical endomorphism. 
\begin{definition}
The fiberwise \emph{vertical endomorphism} $v_X : THM \circlearrowleft$ associated  to a second order differential equation $X$ is the field of $C^\infty$ linear endomorphism defined by the relations:
\begin{enumerate}
\item  $v_X(X)=0$; 
\item $ v_X(Y)=0$ and $v_X([X,Y])=-Y $ for any smooth vertical vector field $Y$.  
\end{enumerate}
\end{definition}
The horizontal operator is obtained from the flow transportation of the vertical bundle. 
 \begin{definition} 
 The \emph{horizontal endomorphism} $ H_X : VHM \rightarrow THM$ is defined by 
 \begin{itemize}
 \item $H_X(Y)= -[X,Y]  - \frac{1}{2} v_X([X,[X,Y]])$ for any smooth vertical vector field $Y$;   
\item it is a zero order differential operator of maximal rank. 
\end{itemize}
The \emph{horizontal distribution} is the vector bundle 
$$h_{X}HM  \doteq H_X (VHM)$$ 
of the same dimension as the vertical bundle.
\end{definition}
This completes the description of the splitting $THM = VHM \oplus \mathbb{R}\cdot X\oplus h_XHM$. For later reference, note that:  
\begin{itemize}
\item $v_X \circ H_X = Id|_{VHM} $, 
\item $ H_X \circ v_X|_{h_{X}HM} = Id|_{h_{X}HM} $.
\end{itemize} 
Cf. Figure \ref{fig.2} below. 
\begin{remark} Here (and in the sequel), we use Lie brackets along the flow and, thus, some derivatives could be involved. Nonetheless, one can check\footnote{This can be done by using any $C^{\infty}$ local function $f$, replacing $Y$ by $f Y$, and checking that no derivatives of $f$ remain.} that the expressions above do not rely on derivatives and, hence, are tensors. Similarly, one takes care of all relevant higher rank Lie derivatives. 
\end{remark}

\begin{figure}[h!]
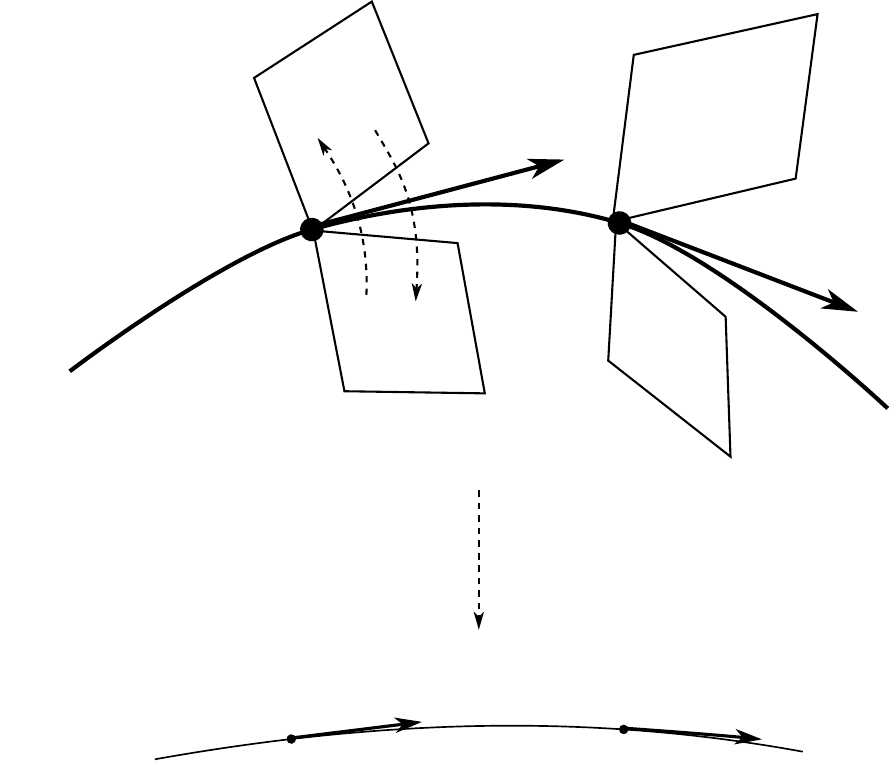\caption{Vertical and horizontal endomorphisms.}\label{fig.2}
\end{figure}

\subsubsection{Dynamical derivative and curvature} 
The action of the flow on the splitting \eqref{e.vert-hor-decomp} produces several tensors and a  kind of covariant derivative along  flow lines using a technique due to Frobenius:  
\begin{definition} The \emph{dynamical derivative} $D_X$ of a second order differential equation $X$ is a first order linear differential operator\footnote{I.e., for any smooth vector field $\xi$ and $ f \in C^1 (HM)$, one has 
$ D_X (f .\xi) = f.D_X (\xi) + L_Xf .\xi $.} on the sections of $THM$ that preserves the splitting \eqref{e.vert-hor-decomp} which is uniquely defined by the relations:  
\begin{enumerate}
\item $D_X (Y)= p^X_v ([X,Y])$ for any smooth vertical vector field $Y$;   
  \item $[D_ X, H_X] = 0$;  
     \item  $ D_X (X) =0$. 
      \end{enumerate}
\end{definition}
\begin{remark} From the definition of the horizontal distribution, we deduce that $ D_X(Y)= - \frac{1}{2} v_X([X,[X,Y]])$. 
\end{remark}
\begin{definition}  A vector field $\xi $ is \emph{parallel} if $ D_X(\xi)=0$. Similarly, a distribution is \emph{parallel} if the dynamical derivative of any of its smooth section remains in the distribution: this is clearly the case for each distribution of the splitting \eqref{e.vert-hor-decomp}. Finally, a tensor is \emph{parallel} if it commutes with $D_X$.
\end{definition}

\begin{definition} 
The \emph{Jacobi endomorphism} or \emph{dynamical curvature} is the $0$-differential linear operator (tensor) defined by the relations: 
\begin{enumerate}
\item $ R_X( Y) = p^X_v ([X,H_X(Y)]) $  for any smooth vertical vector field $Y$; 
\item $[R_X, H_X]=0$; 
\item $R_X(X) = 0$. 
\end{enumerate}
\end{definition}

\begin{remark}\label{r.contact}
As usual, the Jacobi curvature endomorphism measures the flow deviation of the full horizontal distribution in the sense that, for any $\xi\in  hor_X := \mathbb{R}\cdot X\oplus h_XHM $, one has $p^X_v( [X,\xi]) = R_X(v_X(\xi))$. Also, the dynamical derivative is the inner part of the Lie bracket with the flow with respect to the splitting \eqref{e.vert-hor-decomp}: for instance, $ [X,Y] = D_X (Y) - H_X(Y) $. Furthermore, all these tensors and operators are designed to commute with the vertical and horizontal endomorphisms, i.e
 \begin{itemize}
\item{}
$ [ v_X, D_X] =0 \ , \ [ v_X, R_X] =0 $; 
\item{}
$ [ H_X, D_X] =0  \ ,  \ [ v_X, R_X] =0 $.
\end{itemize}
Finally, the distribution $VHM \oplus h_XHM$ might not be flow invariant in general: the lack of flow invariance of this distribution is measured by the one-form   
$$ \beta (h) \doteq p^X_X([X, h])$$ 
on $h_XHM$. 
\end{remark}

\subsubsection{Jacobi equation}
For second order differential equations, it is possible to get an analog of what is known as Jacobi equation extending the usual notion in the Riemannian case. The point is to study infinitesimally the flow transport of  a vector along an orbit: given $z \in HM$ and $\xi_0 \in T_zHM$, we consider $ \xi( \varphi_t(z)) = d\varphi_t (\xi_0)$ which, by definition, commutes with the generator of the flow along the orbit. Using the splitting \eqref{e.vert-hor-decomp}, we can set 
$ \xi = Y + h + \lambda. X$ and then write the commutation relation
$ [X,\xi] = 0 $ as 
\begin{enumerate}
\item 
$ D_X( Y) = - R^X(v_X(h))$; 
\item 
$D_X(h) =  H_X(Y)$; 
\item 
$L_X \lambda = -\beta (h)$. 
\end{enumerate}
Observe that (1) and (2) above  may be combined into the so-called \emph{Jacobi equation}  
$$ D_X D_X(h) + R_X(h) =0.$$

\subsection{Time changes}
For our purposes, it is important to consider the effects of a time change on all of these invariants. Recall that a second order differential equation $X$ has the  same orbits up to time change as $X_0$ near a point $z$ if there exists a neighborhood $U_z$ of $z$ and a positive smooth function $m$ such that 
$$ X =m. X_0 $$
The next proposition relates the geometric structures of $X$ and $X_0$.
 
\begin{proposition}\label{p.time-change} The splitting \eqref{e.vert-hor-decomp} of  $ X=m. X_0$ is related to the splitting of $X_0$ via: 
\begin{enumerate}
\item[(1)] 
$ v_X = \frac{1}{m } . v_{X_0}$
\item[(2)] 
$ H_X (Y) = m . H_{X_0}(Y) + L_Y m . X_0 + \frac{1}{2}L_{X_0} m . Y $
\end{enumerate}
On the vertical bundle $VHM$, the dynamical derivatives and curvatures of $X$ and $X_0$ are linked by the formulas: 
\begin{enumerate} 
\item[(3)] 
$ D_X  = m. D_{X_0} + \frac{1}{2} L_{X_0}m  . Id_{VHM}$
\item[(4)] 
$ R_X = m^2 . R_{X_0} + \lbrack \frac{1}{2}mL^2_{X_0}m - \frac{1}{4}(L_{X_0}m)^2 \rbrack.  Id _{VHM}$
\end{enumerate}
\end{proposition}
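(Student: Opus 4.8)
The plan is to read off all four formulas directly from the defining relations of $v_X$, $H_X$, $D_X$, $R_X$ given in Section~\ref{s.Foulon}, using only the Leibniz-type bracket identity $[fU,gV]=fg\,[U,V]+f(L_Ug)\,V-g(L_Vf)\,U$. We would start with (1), which is essentially forced: the candidate $\tfrac1m v_{X_0}$ annihilates $X=mX_0$ (because $v_{X_0}(X_0)=0$) and all vertical vectors, and on $[X,Y]=m\,[X_0,Y]-(L_Ym)\,X_0$ (for vertical $Y$) it takes the value $-Y$ since again $v_{X_0}(X_0)=0$; hence it obeys the relations characterizing $v_X$, and uniqueness gives $v_X=\tfrac1m v_{X_0}$.

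Items (2) and (3) would then be handled together from the formulas $D_X(Y)=-\tfrac12 v_X([X,[X,Y]])$ and $H_X(Y)=-[X,Y]+D_X(Y)$ valid for vertical $Y$. Expanding $[X,[X,Y]]$ with the Leibniz identity produces $m^2[X_0,[X_0,Y]]+m(L_{X_0}m)\,[X_0,Y]$ plus a function times $X_0$; applying $v_X=\tfrac1m v_{X_0}$ kills the $X_0$-term and, using $v_{X_0}([X_0,Y])=-Y$ together with $v_{X_0}([X_0,[X_0,Y]])=-2D_{X_0}(Y)$, leaves $-2m\,D_{X_0}(Y)-(L_{X_0}m)\,Y$. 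Halving and negating is exactly (3). Feeding this back into $H_X(Y)=-[X,Y]+D_X(Y)$ and rewriting $[X,Y]=m\,[X_0,Y]-(L_Ym)\,X_0$, $[X_0,Y]=D_{X_0}(Y)-H_{X_0}(Y)$, the $D_{X_0}(Y)$-terms recombine into $m\,H_{X_0}(Y)$ and (2) drops out.

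For (4) the first step would be to compare the two vertical projections: by (1) we have $\ker v_X=\ker v_{X_0}=VHM\oplus\mathbb{R}\cdot X$, but the horizontal complements $h_XHM$ and $h_{X_0}HM$ genuinely differ, and expressing the $X_0$-horizontal part of a vector $w$ as $H_{X_0}(v_{X_0}(w))$ and converting $H_{X_0}$ to $H_X$ via (2) yields $p^X_v=p^{X_0}_v-\tfrac1{2m}(L_{X_0}m)\,v_{X_0}$. The second step would be to compute $R_X(Y)=p^X_v([X,H_X(Y)])$: substitute (2) for $H_X(Y)$, expand $[X,H_X(Y)]$ with the Leibniz identity, discard the (many) multiples of $X_0$, which $p^X_v$ annihilates, and simplify the rest with the $X_0$-versions of the structure relations, notably $p^{X_0}_v([X_0,H_{X_0}(Y)])=R_{X_0}(Y)$, $v_{X_0}([X_0,H_{X_0}(Y)])=D_{X_0}(Y)$, $p^{X_0}_v([X_0,Y])=D_{X_0}(Y)$, $v_{X_0}([X_0,Y])=-Y$ and $v_{X_0}(H_{X_0}(Y))=Y$. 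One then checks that the contributions proportional to $D_{X_0}(Y)$ cancel and that the surviving scalar coefficient collapses to $\tfrac12 m\,L^2_{X_0}m-\tfrac14(L_{X_0}m)^2$, which is (4).

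The main obstacle will be item (4). In contrast with (1)--(3), where the answer is dictated by a uniqueness statement, (4) forces us to keep track of the discrepancy between $p^X_v$ and $p^{X_0}_v$ (the horizontal distributions of $X$ and $X_0$ are not the same), to organize the roughly half-dozen terms carrying first and second derivatives of $m$ without sign errors, and --- in the spirit of the tensoriality remark of Section~\ref{s.Foulon} --- to verify that after replacing $Y$ by $fY$ no derivative of $f$ survives, so that the displayed relations are genuine identities of tensors rather than of particular vector fields.
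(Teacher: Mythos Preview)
The paper states Proposition~\ref{p.time-change} without proof (it is a classical computation in the spirit of \cite{Fo1}), so there is nothing to compare against. Your plan is the natural one and is correct: items (1)--(3) follow exactly as you say from the characterizing relations of $v_X$, the formula $D_X(Y)=-\tfrac12 v_X([X,[X,Y]])$, and $H_X(Y)=-[X,Y]+D_X(Y)$; and for (4) your key identity $p^X_v=p^{X_0}_v-\tfrac{1}{2m}(L_{X_0}m)\,v_{X_0}$ together with the structure relations you list does make the $D_{X_0}(Y)$-contributions cancel and leaves precisely $m^2R_{X_0}(Y)+\bigl[\tfrac12 m L_{X_0}^2 m-\tfrac14(L_{X_0}m)^2\bigr]Y$. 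The only point worth making explicit in the write-up is the justification of $v_{X_0}([X_0,H_{X_0}(Y)])=D_{X_0}(Y)$, which follows from Remark~\ref{r.contact} (the dynamical derivative is the ``inner part'' of $[X_0,\cdot]$ with respect to the splitting) together with $[D_{X_0},H_{X_0}]=0$ and $v_{X_0}\circ H_{X_0}=\mathrm{Id}_{VHM}$.
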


\subsection{Projectively flat second order differential equation}
A crucial fact about projective convex structures is that, by definition, their geodesic flows are \emph{locally projectively flat}: recall that a second order differential equation is locally projectively if any $p\in M$ possesses a neighborhood $U_p$  homeomorphic to a domain in $\mathbb{R}^n$ such that, in this chart, all pieces of geodesic are straight segments. Such second order differential equations are the simplest as possible: there is no local dynamics, and the complexity can only come from global properties which are driven by the topology of the compact manifold $M$ or, more precisely, by representations of its fundamental group in $PSL(n+1, \mathbb{R})$. 

As we already mentioned in Section \ref{s.preliminaries}, for strictly convex projective structures, we can pass to the universal cover and use the developing map $\mathcal {D} : \tilde M : \rightarrow P(\mathbb{R}^{n+1})$ to find  an affine chart $\mathbb{R}^n$ containing $ \Omega = \mathcal {D}( \tilde M) $. We can also consider the pull-back of the generator of the Hilbert flow which will be  denoted $\tilde X$ (by abuse of notation). Note that the chart comes with an Euclidean structure whose geodesic flow is generated by $X_0$ defined on the homogeneous bundle $H\Omega$: this Euclidean vector field has obviously trivial  tensors $ R_{X_0} =0$ and $\beta _{X_0}=0 $, and the parallel transport with respect to $X_0$ is the usual Euclidean transport. The next proposition (cf. \cite{Cr2}) says that $\tilde X$ and $X_0$ are related by an \emph{explicit time change} whose regularity is determined by the smoothness of $\partial\Omega$ (namely, $C^{1+\alpha}$ for some $\alpha>0$, but not $C^2$ except in the hyperbolic Riemannian case where $\partial\Omega$ is $C^{\infty}$, see \cite{B1}). 

\begin{proposition}\label{p.explicit-change}
One has $ \tilde X =m. X_0 $ where $ m: H\Omega  \rightarrow \mathbb{R}_*^{+}$  is the positive function 
$$ m(x , [v]) = 2 . \frac{ |x^+-x| . |x-x^-|}{|x^+-x^-|}$$
related to the Finsler structure underlying the Hilbert metric of $\Omega$. In particular, $m$ has the same regularity of $\partial\Omega$, i.e., $m \in C_{X_0}^{1+\alpha} (\Omega,\mathbb{R}^{+,*})$ (where this notation means that all derivatives along the flow of $X_0$  are still in $C^{1+\alpha}$).
\end{proposition}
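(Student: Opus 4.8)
The plan is to recognize $m$ as the reparametrization factor between the Hilbert and Euclidean geodesic flows, to compute it from the Finsler description of the Hilbert metric recalled in Section~\ref{s.preliminaries}, and then to read off its regularity from that of $\partial\Omega$. First I would note that $\tilde X$ and $X_0$ are both second order differential equations on $H\Omega$ with the \emph{same} orbits: the orbits of $X_0$ are the Euclidean geodesics of the affine chart, i.e. the straight line segments contained in $\Omega$, and these are precisely the geodesics of the Hilbert metric since $\Omega$ is strictly convex; moreover, along such a segment the direction $[v]$ is constant in the affine chart, so on each orbit the two flows differ only by a positive reparametrization. Hence there is a unique positive $m\colon H\Omega\to\mathbb{R}_*^{+}$ with $\tilde X=m\cdot X_0$, and it remains to identify $m$ and its regularity.

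To compute $m$, fix $(x,[v])\in H\Omega$ and let $s\mapsto c(s)$ be the Hilbert-unit-speed parametrization of the orbit through $(x,[v])$, so that $\tilde X\big(c(s)\big)=\dot c(s)$. Writing $\tau$ for Euclidean arc length along this segment, the chain rule gives $\tilde X=(d\tau/ds)\,X_0$, so that $m$ equals the Euclidean speed $|\dot c(s)|$ of a Hilbert-unit-speed motion, that is, the reciprocal of the Finsler norm of a Euclidean unit vector. Substituting $|v|=1$ into the formula $\|v\|_x=\tfrac12\big(\tfrac{1}{|x^+-x|}+\tfrac{1}{|x-x^-|}\big)|v|$ recalled in Section~\ref{s.preliminaries} yields
$$ m(x,[v])=\left(\frac12\left(\frac{1}{|x^+-x|}+\frac{1}{|x-x^-|}\right)\right)^{-1}=\frac{2\,|x^+-x|\,|x-x^-|}{|x^+-x|+|x-x^-|}, $$
and since $x$ lies on the chord from $x^-$ to $x^+$ one has $|x^+-x|+|x-x^-|=|x^+-x^-|$, giving the asserted formula. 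The expression is symmetric under swapping $x^+$ and $x^-$, so the labeling of the two boundary points is irrelevant.

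For the regularity, the only non-smooth ingredient is the endpoint map $E\colon(x,[v])\mapsto(x^-,x^+)$, which assigns to $(x,[v])$ the two intersection points of $\partial\Omega$ with the line through $x$ of direction $v$. Once $x^\pm$ are fixed, $m$ is a quadratic polynomial in the Euclidean arc-length parameter along the chord; hence along any $X_0$-orbit one has $L_{X_0}^k m\equiv 0$ for $k\geq 3$, while $m$, $L_{X_0}m$ and $L_{X_0}^2 m$ are each explicit smooth functions of $x^\pm$ and of the position of $x$ on the chord, so controlling all iterated $X_0$-derivatives of $m$ reduces to controlling $E$. On the other hand, by strict convexity every chord meets $\partial\Omega$ transversally at its endpoints, so the implicit function theorem (valid in H\"older classes) shows that $E$ has the same regularity as $\partial\Omega$, namely $C^{1+\alpha}$ for some $\alpha>0$ by Benoist~\cite{B1}; combining these facts gives $m\in C_{X_0}^{1+\alpha}(\Omega,\mathbb{R}^{+,*})$. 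I expect this regularity step --- making precise the anisotropic class $C_{X_0}^{1+\alpha}$ and checking that the $X_0$-derivatives of $m$ really inherit only the transverse $C^{1+\alpha}$ bound --- to be the main technical point; it is worked out in detail in \cite{Cr2}, whereas the first two steps are essentially formal once the Finsler description of the Hilbert metric is in hand.
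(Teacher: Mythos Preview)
The paper does not give its own proof of this proposition: it is simply attributed to Crampon \cite{Cr2}. Your argument is correct and is exactly the natural one --- identify the common orbits, read off the reparametrization factor as the reciprocal of the Finsler norm of a Euclidean unit vector using the formula from Section~\ref{s.preliminaries}, and extract the regularity from that of the endpoint map --- so there is nothing to compare against in the paper beyond the citation, and your sketch matches what one finds in \cite{Cr2}.
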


\begin{remark}\label{r.special-chart} 
A change of affine chart by a projective transformation preserves these formulas, the Hilbert distance and the regularity of $m$. A particularly convenient choice of chart \emph{adapted} to a point $ (x_0, [v_0])$ consists of selecting the affine coordinates such that the tangents to $\Omega$ at the points $ x^+_0, x^-_0$ are orthogonal to $[v_0]$ (and parallel among themselves). On the other hand, the function $m$ does \emph{not} descend to $HM$. 
\end{remark}


\subsection{Hilbert form}
Crampon \cite[Prop. 3.6]{Cr2} took advantage of the special charts in Remark \ref{r.special-chart} to prove that the Hilbert flow is a sort of contact flow: 
\begin{proposition} For a strictly convex divisible subset $\Omega$, one has $\beta = \beta_{\tilde X} = 0$ or, equivalently, the  distribution $VHM \oplus h_X HM $ is flow invariant (cf. Remark \ref{r.contact}). In particular, the Hilbert form $A$ defined by
$$ A(X)= 1, \quad A_{\vert VHM \oplus h_X HM} =0$$
is a flow invariant 1-form on $THM$ which is H\"older continuous.
\end{proposition}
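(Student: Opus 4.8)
The plan is to prove that $\beta=\beta_{\tilde X}\equiv 0$; once this is known, $VHM\oplus h_XHM=\ker A$ is flow invariant by Remark \ref{r.contact}, and since $A(X)=1$ with $X$ flow invariant this forces $L_XA=0$. Indeed, for $h\in h_XHM$ a rearrangement of the Jacobi equations of Section \ref{s.Foulon} gives $[X,h]=D_X(h)+R_X(v_X(h))+\beta(h)X$, so $(L_XA)(h)=X(A(h))-A([X,h])=-\beta(h)$, while $(L_XA)$ visibly vanishes on $\mathbb R\cdot X$ and, using $[X,Y]=D_X(Y)-H_X(Y)$, on $VHM$. Hence everything reduces to the vanishing of the $0$-th order tensor $\beta_{\tilde X}$, and for that it suffices to check $\beta_{\tilde X}(x_0,[v_0])=0$ at an arbitrary point. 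So we pass to the universal cover and work in an affine chart, where $\tilde X=m\cdot X_0$ by Proposition \ref{p.explicit-change}, $X_0$ being the (unit speed) Euclidean geodesic flow, for which $R_{X_0}=0$ and $\beta_{X_0}=0$.

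First I would feed the time-change formula $H_{\tilde X}(Y)=m\,H_{X_0}(Y)+(L_Ym)\,X_0+\tfrac12(L_{X_0}m)\,Y$ from Proposition \ref{p.time-change} into $[\tilde X,H_{\tilde X}(Y)]=[mX_0,H_{\tilde X}(Y)]$, expand using Leibniz together with the $X_0$-identities $[X_0,H_{X_0}(Y)]=D_{X_0}(H_{X_0}(Y))$ and $[X_0,Y]=D_{X_0}(Y)-H_{X_0}(Y)$ (here the vanishing of $R_{X_0}$ and $\beta_{X_0}$ is exactly what makes the first identity so short), and then read off the $\mathbb R\tilde X$-component of the result relative to the $\tilde X$-splitting, re-using the time-change formula to re-express the $X_0$-horizontal vectors in the $\tilde X$-frame. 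After the bookkeeping one obtains a universal identity of the schematic shape
$$\beta_{\tilde X}(H_{\tilde X}(Y))=-2\,L_{H_{X_0}(Y)}m+L_Y(L_{X_0}m)-\frac{2}{m}(L_Ym)(L_{X_0}m),$$
the precise coefficients coming out of the calculation — that is, a combination of first and mixed second derivatives of $m$ in the vertical, the flow, and the $X_0$-horizontal directions. I expect this step to be the main obstacle: carefully tracking the $\mathbb R\tilde X$-component through the iterated brackets and the change of frame while converting everything into Lie derivatives of $m$.

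The decisive step is then to evaluate this at $(x_0,[v_0])$ in the \emph{adapted} chart of Remark \ref{r.special-chart}, in which the supporting hyperplanes of $\partial\Omega$ at $x_0^+$ and $x_0^-$ are orthogonal to $[v_0]$. Writing $[v_0]=[e_n]$ and $x_0^\pm=\pm t^\pm e_n$, near $x_0^\pm$ the boundary is the graph $x_n=\pm t^\pm\mp q^\pm(x')$ with $q^\pm$ vanishing to second order at the origin (precisely because the tangent plane there is horizontal). Therefore, perturbing $x_0$ transversally to $[v_0]$ or perturbing the direction $[v_0]$, the three distances $|x^+-x|$, $|x-x^-|$, $|x^+-x^-|$ entering $m$ change only to second order, so $m$ is stationary to first order in every vertical and every $X_0$-horizontal direction at $(x_0,[v_0])$; in particular $L_Ym$ and $L_{H_{X_0}(Y)}m$ vanish there. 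Moreover the \emph{same} chart is adapted to every point of the geodesic through $(x_0,[v_0])$ (its endpoints and direction are constant along it), so $L_Ym$ vanishes identically along the $X_0$-orbit; consequently $L_Y(L_{X_0}m)=L_{X_0}(L_Ym)-L_{[X_0,Y]}m$ also vanishes at $(x_0,[v_0])$, the first term by differentiating along the orbit and the second because $[X_0,Y]=D_{X_0}(Y)-H_{X_0}(Y)\in VHM\oplus h_{X_0}HM$. Plugging these vanishings into the identity above gives $\beta_{\tilde X}(x_0,[v_0])=0$, and since $(x_0,[v_0])$ was arbitrary, $\beta\equiv 0$.

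It remains to see that $A$ is Hölder continuous. Here it is cleanest to record the closed form that the same computation produces, namely $A=\frac1m\bigl(A_0-v_{\tilde X}^{\,*}(dm)\bigr)$, where $A_0$ is the canonical $1$-form of $X_0$ (defined just as $A$ with $\tilde X$ replaced by $X_0$; being the Riemannian flat case it is smooth) and $v_{\tilde X}^{\,*}(dm)(\xi):=dm(v_{\tilde X}\xi)$; one checks directly that this $A$ satisfies $A(\tilde X)=1$ and $A|_{VHM\oplus h_{\tilde X}HM}=0$ using $v_{\tilde X}(\tilde X)=0$, $v_{\tilde X}|_{VHM}=0$, $v_{\tilde X}\circ H_{\tilde X}=\mathrm{Id}|_{VHM}$ and the time-change formula for $H_{\tilde X}$. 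Since $m\in C^{1+\alpha}_{X_0}$ by Proposition \ref{p.explicit-change} — so $m$, $1/m$ and the flow-derivatives of $m$ are $C^{1+\alpha}$ while the transverse derivatives appearing in $dm$ are only $C^{\alpha}$, matching the regularity of $\partial\Omega$ — this formula exhibits $A$ as a $C^{\alpha}$ tensor, i.e. Hölder continuous, and not better in general since $\partial\Omega$ is not $C^2$.
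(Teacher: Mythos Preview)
The paper does not give its own proof of this proposition: it is quoted from Crampon \cite[Prop.~3.6]{Cr2}, with the single indication that Crampon ``took advantage of the special charts in Remark~\ref{r.special-chart}''. Your proposal is precisely a reconstruction of that strategy --- express $\beta_{\tilde X}$ through the time-change formulas of Proposition~\ref{p.time-change} as a combination of transverse and mixed flow/transverse derivatives of $m$, then kill all of them at an arbitrary point by passing to an adapted chart --- so in spirit your approach coincides with the one the paper attributes to Crampon. Your closed formula $A=\tfrac{1}{m}(A_0-v_{\tilde X}^{*}dm)$ is correct (the verification you sketch goes through) and gives the H\"older regularity cleanly.

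One genuine technical point to tighten: you invoke the commutator identity $L_Y(L_{X_0}m)=L_{X_0}(L_Ym)-L_{[X_0,Y]}m$, but this Schwarz-type equality of mixed derivatives requires $m\in C^2$, which fails here ($m$ is only $C^{1+\alpha}$, and $L_Ym$ is a priori only $C^{\alpha}$, so $L_{X_0}(L_Ym)$ is not defined as a continuous function). The fix is immediate and stays entirely within your setup: compute $L_Y(L_{X_0}m)$ \emph{directly} from the explicit expression $L_{X_0}m=2\,\dfrac{|x-x^-|-|x^+-x|}{|x^+-x^-|}$ (obtained by differentiating $m$ along the line with $x^\pm$ fixed). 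In the adapted chart the first variation of each of $|x^+-x|$, $|x-x^-|$, $|x^+-x^-|$ with respect to a vertical perturbation of $[v_0]$ vanishes (because $\nabla q^{\pm}(0)=0$), hence $L_Y(L_{X_0}m)(x_0,[v_0])=0$. This uses only that $\partial\Omega$ is $C^1$, matching the available regularity, and replaces the Schwarz step without altering the rest of your argument.
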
 

\begin{remark} 
\mbox{ }

\begin{itemize}
\item In smooth Riemannian or Finsler settings, the flow invariance of the Hilbert form (which is eventually the vertical differential of the Finsler norm, cf. \cite{Fo1}) is  classical and easily obtained from variational calculus.\\
\item  For strongly convex smooth Finsler metrics,  the distribution $VHM \oplus h_X HM $ is a contact structure invariant by the flow and $A \wedge dA^{n+1}$ is an invariant volume form.\\
\item  The lack of differentiability in our case makes the proof of the flow invariance into a deep consequence of  the projective invariance. \\
\item The non-existence of the exterior differential of the Hilbert form suggests that there is no  flow invariant probability measure which is absolutely continuous with respect to a Lebesgue measure except in the  Riemannian case. This was confirmed by Benoist \cite{B1} using the fact that each periodic orbit carries an obstruction. The vanishing of all these obstructions is possible only if the image of the fundamental group representation is not Zariski dense, i.e., in Riemannian case. 
\item  As shown by Crampon, all the formulas presented above for projectively flat \emph{smooth} second order differential equations extend to this \emph{low regularity} situation.  
  \end{itemize}
 \end{remark}
 
Moreover, Crampon \cite{Cr2} checked that Hilbert geometries have constant negative curvature (among other properties): 
\begin{proposition}\label{p.constant-curvature} Let $X$ be the Hilbert generator of a strictly convex  projective structure. 
\begin{enumerate} 
\item  On  $VHM \oplus h_X HM $,  
$$ R^X \equiv- Id$$
\item Given an affine chart with generator $X_0$, if $Y_0$ is an Euclidean parallel vector field, then the vertical vector field 
$ Y= ( m^{-1/2} . Y_0)$ is $X$-parallel
$$ D_X ( m^{-1/2} . Y_0) =0$$
\item  If $h_0 $ is an Euclidean parallel horizontal vector field, then  
$ Y_0 = V_{X_0} (h_0)$ is also an Euclidean parallel vector field and 
$$ h = m^{1/2}h_0 + L_{X_0} m^{1/2} . Y_0+ 2. L_{Y_0}m^{1/2} . X_0 $$
is a horizontal parallel vector field for $X$ (thanks to Prop. \ref{p.time-change} (1) and (2)).
Here, these formulas are explicitly related to the boundary of $ \Omega$ (via Prop. \ref{p.explicit-change}).
\end{enumerate}
\end{proposition}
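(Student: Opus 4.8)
The plan is to deduce all three items from the time-change dictionary of Proposition~\ref{p.time-change} applied to $X = m X_0$, where $X_0$ is the flat (Euclidean) geodesic generator on $H\Omega$ in an affine chart containing $\Omega$ and $m$ is the explicit function of Proposition~\ref{p.explicit-change}; the facts I feed in on the $X_0$-side are that $R_{X_0}\equiv 0$, that $H_{X_0}$ and $v_{X_0}$ are tensors with $H_{X_0}\circ v_{X_0}=Id$ on $h_{X_0}HM$, and that a Euclidean parallel field $\xi_0$ satisfies $D_{X_0}\xi_0=0$. The low regularity of $m$ is not a real obstacle here: as recalled above, Crampon verified that the whole smooth formalism survives verbatim in the $C^{1+\alpha}$ category, so every identity below may be manipulated as in the smooth case. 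Exactly one step is not pure bookkeeping, and I flag it at the end.

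For item~(1), I would start from Proposition~\ref{p.time-change}(4): since $R_{X_0}=0$, on $VHM$ it reads $R_X = \big(\tfrac12 m L^2_{X_0} m - \tfrac14 (L_{X_0}m)^2\big)\,Id_{VHM}$. Writing $g:=m^{1/2}$ and expanding $m=g^2$, the scalar coefficient collapses to $g^3 L^2_{X_0} g = m^{3/2}L^2_{X_0}(m^{1/2})$, so item~(1) on $VHM$ amounts to the scalar identity $L^2_{X_0}(m^{1/2}) = -\,m^{-3/2}$. To verify it I would restrict $m$ to a single orbit of $X_0$, i.e.\ to one straight chord of $\Omega$ parametrized by Euclidean arclength $t$: along such an orbit the chord and hence its two endpoints $x^\pm$ are fixed, so $|x^+-x|=a-t$, $|x-x^-|=b+t$, $|x^+-x^-|=a+b$, and Proposition~\ref{p.explicit-change} gives $m(t)=2(a-t)(b+t)/(a+b)$; a one-variable computation of $\frac{d^2}{dt^2}\sqrt{m(t)}\big|_{t=0}$ then returns exactly $-m(0)^{-3/2}$. (Alternatively: this coefficient is computed orbit by orbit, and the restriction $2(a-t)(b+t)/(a+b)$ has the same functional form for \emph{every} strictly convex $\Omega$, so the coefficient equals the one obtained for an ellipsoid, whose Hilbert metric is hyperbolic of curvature $-1$.) Finally, to pass from $VHM$ to $h_XHM$ I use $[R_X,H_X]=0$ and $H_X(VHM)=h_XHM$: if $h=H_X(Y)$ then $R_X(h)=H_X(R_X Y)=H_X(-Y)=-h$, so $R_X\equiv -Id$ on all of $VHM\oplus h_XHM$.

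Items~(2) and~(3) are then formal. For~(2): $D_X$ is a first-order operator and $L_X f = m L_{X_0}f$ on functions, so $D_X(m^{-1/2}Y_0) = m^{-1/2}D_X(Y_0) + \big(m L_{X_0}(m^{-1/2})\big)Y_0$; by Proposition~\ref{p.time-change}(3) and $D_{X_0}Y_0=0$ one has $D_X(Y_0)=\tfrac12(L_{X_0}m)Y_0$, while $m L_{X_0}(m^{-1/2}) = -\tfrac12 m^{-1/2}L_{X_0}m$, and the two contributions cancel. For~(3): first, $Y_0 := v_{X_0}(h_0)$ is again Euclidean parallel, because $v_{X_0}$ commutes with $D_{X_0}$, whence $D_{X_0}Y_0 = v_{X_0}(D_{X_0}h_0)=0$; then I compute $H_X(m^{-1/2}Y_0)$ from Proposition~\ref{p.time-change}(2), using that $H_{X_0}$ is a tensor with $H_{X_0}(v_{X_0}(h_0))=h_0$ on $h_{X_0}HM$ and the elementary identities $L_{X_0}m = 2m^{1/2}L_{X_0}(m^{1/2})$ and $L_{m^{-1/2}Y_0}m = 2 L_{Y_0}(m^{1/2})$. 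The three terms come out as $m^{1/2}h_0$, $2L_{Y_0}(m^{1/2})X_0$ and $L_{X_0}(m^{1/2})Y_0$, i.e.\ $H_X(m^{-1/2}Y_0)=h$; since $m^{-1/2}Y_0$ is $X$-parallel by~(2) and $[D_X,H_X]=0$, the field $h=H_X(m^{-1/2}Y_0)$ is horizontal and $X$-parallel, which is~(3).

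The single genuinely non-formal step --- and the one I expect to demand the most care --- is the scalar identity $L^2_{X_0}(m^{1/2}) = -m^{-3/2}$ behind item~(1): it is the only place where the actual Euclidean geometry of the chords of $\Omega$ enters, and it is also where one has to be careful that the second flow-derivative of $m^{1/2}$ is legitimate in the $C^{1+\alpha}$ setting (this is covered by the extension of Proposition~\ref{p.time-change} to low regularity). Everything else is substitution into the time-change formulas.
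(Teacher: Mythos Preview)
The paper does not supply a proof of this proposition; it simply attributes the result to Crampon \cite{Cr2} and states it. Your argument is correct, and it is in fact exactly the route the paper's statement itself points to: item~(3) is flagged as following ``thanks to Prop.~\ref{p.time-change} (1) and (2)'', and your derivation of all three items from the time-change dictionary is the natural unpacking of that hint. The one substantive computation --- the scalar identity $L^2_{X_0}(m^{1/2}) = -m^{-3/2}$ along a chord --- is carried out correctly (indeed $2ff''-(f')^2 = -(a+b)^2$ is constant in $t$, so the identity holds along the whole orbit, not just at $t=0$), and your extension of $R_X=-Id$ from $VHM$ to $h_XHM$ via $[R_X,H_X]=0$ is clean. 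Items~(2) and~(3) are, as you say, formal consequences of Proposition~\ref{p.time-change}(2)--(3) together with the Leibniz rule for $D_X$.
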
 

\subsection{Stable and unstable distributions} 

The tangent spaces to the stable and unstable horospheres of the Hilbert flow are given by the following (picture and) result: 

\begin{figure}[h!]
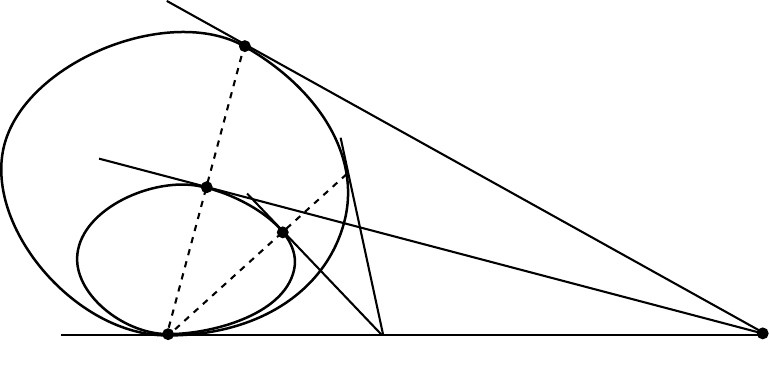\caption{Tangent distributions to stable horospheres $S_{x^+}(x)$. See \cite{B1} for more details on this construction.}
\end{figure}

\begin{proposition} Let $X$ be the geodesic generator of a strictly convex projective structure on a compact manifold. The following distributions are flow invariant and $X$-parallel
$$ E^+ = \lbrace \xi =Y+H_X (Y) , Y \in VHM \rbrace$$
$$ E^-  = \lbrace  \xi  =-Y +H_x(Y) , Y \in VHM \rbrace $$
\end{proposition}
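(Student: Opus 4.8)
The plan is to reduce the statement to the Jacobi equation of Section~\ref{s.Foulon}, feeding in the two structural facts just established for Hilbert generators: $R_X\equiv-Id$ on $VHM\oplus h_XHM$ (Proposition~\ref{p.constant-curvature}) and $\beta=0$ (the Hilbert-form proposition).

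First I would dispose of the $X$-parallelism, which is formal and uses neither input. A smooth section of $E^+$ is of the form $\xi=Y+H_X(Y)$ for a vertical vector field $Y$; since $D_X$ preserves the splitting \eqref{e.vert-hor-decomp} and $[D_X,H_X]=0$, we get $D_X\xi=D_XY+H_X(D_XY)$, again of the shape $Y'+H_X(Y')$ with $Y':=D_XY\in VHM$, hence a section of $E^+$. The same computation with a sign shows $D_X(-Y+H_X(Y))=-D_XY+H_X(D_XY)$ is a section of $E^-$. So both distributions are $X$-parallel.

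For the flow invariance I would imitate the derivation of the Jacobi equation. Fix $z\in HM$ and $\xi_0\in T_zHM$, set $\xi(\varphi_t(z))=d\varphi_t(\xi_0)$, and decompose $\xi=Y+h+\lambda X$ along the orbit; then $[X,\xi]=0$ gives $D_XY=-R_X(v_X(h))$, $D_Xh=H_X(Y)$, $L_X\lambda=-\beta(h)$. Inserting $R_X\equiv-Id$ and $\beta=0$, writing $Z:=v_X(h)\in VHM$ (so $h=H_X(Z)$, since $H_X\circ v_X=Id$ on $h_XHM$), and applying $v_X$ to the second relation (using $[v_X,D_X]=0$ and $v_X\circ H_X=Id|_{VHM}$), one obtains along the orbit the coupled first-order system $D_XY=Z$, $D_XZ=Y$, $L_X\lambda=0$; equivalently $D_X(Y\pm Z)=\pm(Y\pm Z)$, with $\lambda$ constant. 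Now $\xi_0\in E^+_z$ is precisely the condition $Y(0)=Z(0)$ and $\lambda(0)=0$ (because $H_X$ is injective), so that $W:=Y-Z$ satisfies $D_XW=-W$ with $W(0)=0$; expanding $W$ in a $D_X$-parallel frame of $VHM$ along the orbit turns this into a homogeneous linear ODE with vanishing initial data, forcing $W\equiv0$. Then $Y(t)=Z(t)=v_X(h(t))$, hence $h(t)=H_X(Y(t))$ and $\lambda\equiv0$, i.e. $\xi(t)\in E^+_{\varphi_t(z)}$ for all $t$. The case $\xi_0\in E^-_z$ is identical with $W:=Y+Z$ and $D_XW=W$. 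This proves flow invariance of $E^+$ and $E^-$; one checks separately the complementary decomposition $E^+\oplus E^-=VHM\oplus h_XHM$.

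The only point requiring care is the low regularity: $v_X$, $H_X$, $D_X$, $R_X$ and the one-form $\beta$ are a priori only H\"older, so the ``Jacobi system'' above and the uniqueness argument for $D_XW=\pm W$ must be run in the $C_{X_0}^{1+\alpha}$ category rather than by naive differentiation. As recalled after the Hilbert-form proposition, Crampon's work guarantees that these formulas and the one-dimensional ODE theory along orbits survive in this setting, so I expect this to be a matter of invoking the right extension rather than a genuine obstacle.
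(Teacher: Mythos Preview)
Your proof is correct, but it takes a noticeably different route from the paper's. The paper argues directly at the level of sections: it picks a section $\xi=Y+H_X(Y)$ of $E^+$ and computes the Lie bracket
\[
[X,\xi]=-H_X(Y)+D_X(Y)+D_X(H_X(Y))+R_X(Y)+\beta(H_X(Y))\cdot X,
\]
then plugs in $R_X=-Id$ and $\beta=0$ to get $[X,\xi]=(D_X(Y)-Y)+H_X(D_X(Y)-Y)\in E^+$. That single identity gives flow invariance (a distribution is $\varphi_t$-invariant iff $[X,\cdot]$ preserves its sections), and the $X$-parallelism is implicit in the same computation. You instead work pointwise: push a vector $\xi_0\in E^+_z$ along the orbit, decompose it, and reduce to the first-order system $D_XY=Z$, $D_XZ=Y$, $L_X\lambda=0$, whose diagonalisation and uniqueness for $D_XW=\mp W$ with zero initial data force the flowed vector to stay in $E^\pm$. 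Your approach is longer and needs the extra step of invoking a parallel frame for the ODE uniqueness, but it has the advantage of making the $e^{\pm t}$ behaviour of the next proposition transparent already here; the paper's approach is a one-line Lie-bracket check that establishes the statement with minimal machinery but postpones the exponential rates to a separate argument.
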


\begin{proof} This a direct application of the facts that $R^X =- Id $ and of the flow invariance of the Hilbert form. For example, if $ \xi \in E^+ $, then  
$$ [X, \xi] = - H_X (Y) + D_X(Y) + D_X (H_X (Y)) + R^X(Y) + \beta (H_X (Y)) .X $$
thus
$$ [X, \xi]  = ( -Y + D_X(Y)) + H_x((-Y + D_X(Y)) $$
lies in $ E^+$. 
\end{proof}

Furthermore, it is possible to check that the action of the Hilbert flow on stable horospheres is described by the following figure: 

\begin{figure}[h!]
\begingroup%
  \makeatletter%
  \providecommand\color[2][]{%
    \errmessage{(Inkscape) Color is used for the text in Inkscape, but the package 'color.sty' is not loaded}%
    \renewcommand\color[2][]{}%
  }%
  \providecommand\transparent[1]{%
    \errmessage{(Inkscape) Transparency is used (non-zero) for the text in Inkscape, but the package 'transparent.sty' is not loaded}%
    \renewcommand\transparent[1]{}%
  }%
  \providecommand\rotatebox[2]{#2}%
  \newcommand*\fsize{\dimexpr\f@size pt\relax}%
  \newcommand*\lineheight[1]{\fontsize{\fsize}{#1\fsize}\selectfont}%
  \ifx\svgwidth\undefined%
    \setlength{\unitlength}{221.12933061bp}%
    \ifx\svgscale\undefined%
      \relax%
    \else%
      \setlength{\unitlength}{\unitlength * \real{\svgscale}}%
    \fi%
  \else%
    \setlength{\unitlength}{\svgwidth}%
  \fi%
  \global\let\svgwidth\undefined%
  \global\let\svgscale\undefined%
  \makeatother%
  \begin{picture}(1,0.73346044)%
    \lineheight{1}%
    \setlength\tabcolsep{0pt}%
    \put(0,0){\includegraphics[width=\unitlength,page=1]{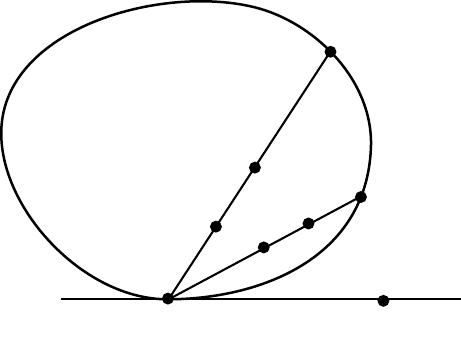}}%
    \put(0.31564472,0.01258192){\color[rgb]{0,0,0}\makebox(0,0)[lt]{\lineheight{1.25}\smash{\begin{tabular}[t]{l}$x^+$\end{tabular}}}}%
    \put(0.37181486,0.24439021){\color[rgb]{0,0,0}\makebox(0,0)[lt]{\lineheight{1.25}\smash{\begin{tabular}[t]{l}$x_t$\end{tabular}}}}%
    \put(0.53611652,0.23051501){\color[rgb]{0,0,0}\makebox(0,0)[lt]{\lineheight{1.25}\smash{\begin{tabular}[t]{l}$y_t$\end{tabular}}}}%
    \put(0.72753855,0.65292903){\color[rgb]{0,0,0}\makebox(0,0)[lt]{\lineheight{1.25}\smash{\begin{tabular}[t]{l}$x^-$\end{tabular}}}}%
    \put(0.80166805,0.28420991){\color[rgb]{0,0,0}\makebox(0,0)[lt]{\lineheight{1.25}\smash{\begin{tabular}[t]{l}$y^-$\end{tabular}}}}%
    \put(0.4557729,0.3754079){\color[rgb]{0,0,0}\makebox(0,0)[lt]{\lineheight{1.25}\smash{\begin{tabular}[t]{l}$x$\end{tabular}}}}%
    \put(0,0){\includegraphics[width=\unitlength,page=2]{figure4.pdf}}%
    \put(0.65247358,0.28408951){\color[rgb]{0,0,0}\makebox(0,0)[lt]{\lineheight{1.25}\smash{\begin{tabular}[t]{l}$y$\end{tabular}}}}%
  \end{picture}%
\endgroup%
\caption{Action of the Hilbert (geodesic) flow on a stable horosphere $S_{x^+}(x)$. See \cite{B1} for more details on this construction.} 
\end{figure}

\subsection{Derivative of the Hilbert flow} \label{sec:derivative-Hilbert-flow}
The Jacobi equation says that we just need to globally understand the parallel transport to control the action of the flow: 
\begin{proposition}
 Given $z =(x, [v])$ 
and $\xi^{\pm}_0 \in E^{\pm}_z$, let   
$ \xi^{\pm}(\varphi_t(z))= d\varphi_t (\xi_0^{\pm})$. Then, 
\begin{equation}
 \xi^{\pm}(\varphi_t(z)) = e^{\pm t} .  \ \eta^{\pm}(\varphi_t(z)),
\end{equation}
where 
\begin{equation}
  \eta^{\pm}= H_X (Y) \pm Y
 \end{equation}
 is an explicit parallel vector field in $E^{\pm}$ (that is, $ Y $ is a $X$-parallel  vertical vector field). 
\end{proposition}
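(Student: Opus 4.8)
The plan is to exhibit, for any $\xi^{\pm}_0\in E^{\pm}_z$, an explicit vector field along the orbit of $z$ which commutes with $X$ and which equals $\xi^{\pm}_0$ at time $0$, and then to read off its exponential growth. First I would fix the vertical data: since $v_X\circ H_X=Id|_{VHM}$, applying $v_X$ to the defining expansions $\xi^+_0=Y_0+H_X(Y_0)$, resp.\ $\xi^-_0=-Y_0+H_X(Y_0)$, shows that $Y_0:=v_X(\xi^{\pm}_0)\in V_zHM$ is the unique vertical vector realizing them. Let $Y$ be the $X$-parallel vertical vector field along $\{\varphi_t(z)\}_{t\in\mathbb{R}}$ determined by $Y(z)=Y_0$; it exists and is unique because $D_X(Y)=0$ is a first order linear ODE along flow lines, and Proposition \ref{p.constant-curvature}(2) even provides the closed formula $Y=m^{-1/2}Y_0$ with $Y_0$ Euclidean-parallel, which is what makes $\eta^{\pm}=H_X(Y)\pm Y$ ``explicit''. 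Since $D_X$ preserves the splitting \eqref{e.vert-hor-decomp} and commutes with $H_X$, the horizontal field $H_X(Y)$ is $X$-parallel too, so $\eta^{\pm}$ is an $X$-parallel vector field lying in $E^{\pm}$.

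The heart of the argument is the Lie-bracket identity $[X,\eta^{\pm}]=\mp\,\eta^{\pm}$, which is exactly the bookkeeping already carried out in the proof that $E^{\pm}$ is flow invariant. Combining $[X,Y]=D_X(Y)-H_X(Y)$ for vertical fields with the decomposition $[X,h]=R^X(v_X(h))+D_X(h)+\beta(h)\,X$ of $[X,h]$ along \eqref{e.vert-hor-decomp} for horizontal $h$, and plugging in $\beta=0$ and $R^X\equiv -Id$ on $VHM\oplus h_XHM$, one gets $[X,H_X(Y)+Y]=(-Y+D_X(Y))+H_X(-Y+D_X(Y))$ and $[X,H_X(Y)-Y]=(-Y-D_X(Y))+H_X(Y+D_X(Y))$; with $D_X(Y)=0$ these reduce to $[X,\eta^+]=-Y-H_X(Y)=-\eta^+$ and $[X,\eta^-]=-Y+H_X(Y)=\eta^-$. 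Since $t$ is the flow time along the orbit, $L_X(e^{\pm t})=\pm e^{\pm t}$, hence $[X,e^{\pm t}\eta^{\pm}]=(\pm e^{\pm t})\,\eta^{\pm}+e^{\pm t}[X,\eta^{\pm}]=0$. Thus $t\mapsto e^{\pm t}\eta^{\pm}(\varphi_t(z))$ is invariant under $d\varphi_t$; being equal to $\xi^{\pm}_0=\eta^{\pm}(z)=H_X(Y_0)\pm Y_0$ at $t=0$, uniqueness of solutions of the linear equation $[X,\cdot]=0$ along the orbit forces $\xi^{\pm}(\varphi_t(z))=d\varphi_t(\xi^{\pm}_0)=e^{\pm t}\eta^{\pm}(\varphi_t(z))$.

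The only genuinely delicate point is the low regularity: with $\partial\Omega$ merely $C^{1+\alpha}$, the endomorphisms $v_X,H_X,D_X,R^X$ and the form $\beta$ are only H\"older, so I would explicitly invoke Crampon's result \cite{Cr2}, recalled above, that the whole formalism — in particular $[D_X,H_X]=0$, $R^X\equiv -Id$, $\beta=0$, and the bracket decompositions — persists in this setting, and that the transport equation $D_X(Y)=0$ still has unique solutions along orbits (made transparent by the formula $Y=m^{-1/2}Y_0$). Granting this, the computation is purely algebraic; the one thing to stay careful about is that the uniqueness step uses the linear system underlying $[X,\cdot]=0$ \emph{only along the single orbit of $z$}, which is all that is available here, rather than on an open set.
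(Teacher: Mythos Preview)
Your argument is correct and follows essentially the same route as the paper: both verify that $e^{\pm t}\eta^{\pm}$ is flow-invariant using the bracket identities, $R^X\equiv -Id$, $\beta=0$, and $D_X(Y)=0$. The only cosmetic difference is that the paper packages the verification through the Jacobi equation $D_XD_X+R_X=0$ applied to $e^{\pm t}Y$, whereas you compute $[X,\eta^{\pm}]=\mp\eta^{\pm}$ directly; your version is in fact a little more explicit about the uniqueness step and the low-regularity caveat.
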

\begin{proof} 
If $Y$ is a parallel vertical vector field, then $ e^{\pm t} Y$ is solution of the Jacobi equation because  
$$  D_X (e^{\pm t} Y) = e^{\pm t}(\pm Y  + D_X(Y)) =e^{\pm t}(\pm Y ) $$ 
and 
$$ D_X D_X (e^{\pm t} Y) + R_X(e^{\pm t} Y)= e^{\pm t}( Y ) - e^{\pm t} Y  =0 .$$
\end{proof} 

\begin{remark} This situation must be compared with the analysis of the derivative of the geodesic flow of Finsler structure on the moduli spaces of translation surfaces induced by the Teichm\"uller metric. In fact, the derivative of this famous geodesic flow is known to be the tensor product of diagonal matrices $\textrm{diag}(e^t, e^{-t})$ by the so-called Kontsevich--Zorich cocycle (corresponding to a parallel transport with respect to the Gauss--Manin connection): see, e.g., Forni's paper \cite{F} for more explanations. 
\end{remark}

\subsection{Foulon's cocycle}\label{ss.Foulon}

In the sequel, the parallel transport will be denoted by $ \tau_t Y_0 = Y(\varphi_t(z))$. Since the tensors $ v_X, H_X$ are H\"older continuous, they are bounded.  Let us choose a norm on $THM$ by declaring that the splitting \eqref{e.vert-hor-decomp} is orthogonal and by writing 
\begin{equation}\label{e.norm-choice-1}
 \vert \xi  \vert = F( d\pi (\xi)) + F ( d\pi( H_X( p^X_v (\xi)) .
 \end{equation}
We will use the same notation for the lifted norm on $TH\Omega$. 
It is also useful to introduce a norm\footnote{Note that it is \emph{not} a norm on $TM$ !} on the vertical bundle $VHM$
\begin{equation}\label{e.norm-choice-2}
 N( Y ) = F ( d\pi( H_X(Y))  
 \end{equation}

So, the Lyapunov exponent of the flow at $\xi^{\pm}_0 \in E^{\pm}_z$ is given by the study of 
$$ \lim_{t \rightarrow \pm \infty }\frac{1}{t} \ln (\vert d \varphi _t \xi _0 \vert) = \pm 1+ \lim_{t \rightarrow \pm \infty }\frac{1}{t} \ln (\vert \tau_t  \xi _0 \vert)$$
Using \eqref{e.norm-choice-1} and \eqref{e.norm-choice-2}, we can express for any $ z\in  HM$ and $\xi^{\pm}_0  \in E_z^{\pm}$
\begin{equation}
\vert d \varphi _t \xi^{\pm}_0 \vert = F( d\pi (\xi^{\pm}_t) + F ( d\pi( H_X( p^X_v (\xi^{\pm}_t)) = 2 e^{\pm t} \lbrace F( d\pi (H_X (Y_t)) \rbrace
 \end{equation}
Furthermore  $Y_t = \tau_t Y_0 = \tau_t( \pm v_X (\xi^{\pm}_0)) = \tau_t (\pm p^X_v(\xi^{\pm}_0))$ is a parallel vertical vector field along the flow orbit of the point $z$. Hence, 
$$ \lim_{t \rightarrow \pm \infty }\frac{1}{t} \ln (\vert d \varphi _t \xi _0 \vert) = \pm 1+ \lim_{t \rightarrow \pm \infty }\frac{1}{t} \ln (N (\tau_t Y _0 ))$$

The outcome of this discussion is that the task of understanding the action of the flow on vectors in the kernel of the Hilbert form is reduced to understand how the parallel transport associated the flow acts on the vertical bundle. For this sake, it is useful to work on the universal cover $\Omega$, choose any affine chart with an Euclidean norm, and to use Proposition \ref{p.constant-curvature} (2). So with the same notation if $Y_w$ is a vertical vector at $w\in H\Omega$ then 
the parallel transport may be expressed using this chart through a trivial cocycle relating it to the Euclidean parallel transport written in that chart. 
\begin{equation}
Y_{\varphi _t (w)} = \tau_t Y_w  = \frac {m^{-1/2}(\varphi _t (w))}  {m^{-1/2}(w) } \ \tau^0_t  Y_z
\end{equation}
\begin {remark} 
\begin{itemize}
\item Any other affine chart will give the ``same'' cocycle. 
\item The parallel transport factor is independent of the transported vector.
\item Using the chart $H\Omega$ may be identified with $\Omega \times S^{n-1}$ and the tangent bundle 
$ TH\Omega $ and the vertical fiber at $ (x,u)$ as the tangent  space to the sphere in direction $ u$ located at $x$ .   The vector  $\tau^0_t  Y_z$ is the same vector as $ Y_z$ but over the point $c_u (t)$ with $c_u (0)=x$
\end{itemize}
\end {remark}

Such an affine chart is not defined on the manifold $M$, so the induced cocycle on the manifold $M$ is no longer  trivial. To understand this, let us choose a compact fundamental domain  $\Delta$. We will take advantage latter that such a domain may be chosen  polyhedral in the affine chart (see, e.g., \cite{Ma}). So, let $ z \in HM$ and $ \tilde z \in H\Delta $ its lift. 
Given $t$ there is a unique $g_t$ in $ \Gamma$ such that $g. \varphi _t (w) \in H\Delta$, where we consider the action of the representation of the fundamental group on the homogenous bundle $H\Omega$. Such an action maps fiber to fiber and so preserve the vertical bundle. The map 
 $ \Theta  : H\Delta \times \mathbb{R}  \rightarrow \Gamma $  
defined by    $ \Theta (z , t)  = g_t$  is not continuous, and  using the equivariance of the Finsler norm  on $\Omega$, of the base projection and of  $H_X$ we get for any vertical vector $Y \in V_wH\Omega$
\begin{equation}
 N(Y) _w=  N(gY) _{g.w} 
 \end{equation} 
 So we can expressed the norm of a vertical vector along a flow orbit 
 \begin{equation}
N( Y_{\varphi _t (z)})  = \frac {m^{-1/2}(\varphi _t (\tilde z))}{m^{-1/2}(\tilde z) }N(g_t ( \tau^0_t  Y_{\tilde z})) 
\end{equation} 
The advantage of considering vectors only in a fixed compact fundamental domain is that the norm is bounded in the affine chart.  \\

We can go a bit further using the fact that all this is independent of the choice of affine chart. Given  an orbit we can  choose an  adapted chart (see  Remark \ref{r.special-chart}). In this case, Proposition \ref{p.constant-curvature} (3) gives along that single orbit that 
$$ H_X (Y) = m^{1/2}H_{X_0} (Y_0)+ L_{X_0} m^{1/2} . Y_0$$
and, \emph{a fortiori},  
$$ N( Y_{\varphi _t (z)})  = \frac {m^{1/2}(\varphi _t (\tilde z))}{m^{1/2}(\tilde z) }\| d\pi( \tau^0_t (H_{X_0}Y_{\tilde z}))\|_ {x_t}$$
The vector $Z_t= d\pi( \tau^0_t (H_{X_0}Y_{\tilde z}))$ is only expressed in terms of the affine chart and with the Euclidean parallel transport which descends to the base manifold  it is the euclidean parallel transport along the chosen geodesic  of the tangent vector $ d\pi  (H_{X_0}Y) \in T_{\tilde x} \Omega$. Thus in a trivialization adapted to the chart $ T\Omega =\Omega \times \mathbb{R}^n$ the vectors field $Z_t$ defined along the geodesic is constant.  
\begin{remark}
\mbox{ }

\begin{itemize}
\item Any horizontal vector is in the kernel of the Hilbert form and projects down with $ d \pi $ to a vector tangent to the horosphere.  So $Z_t$ is tangent to the two horospheres at point $ x_t$ pointing respectively to the chosen points $x^+$ and $x^-$.
\item Notice the exponent in the cocycle has changed in the numerator because we have had to use  euclidean horizontal vector fields to express the norm.
\item The advantage to use parallel horizontal vector fields is that after projection on the base $\Omega$ we will just have to consider the action of the group on the tangent space. instead of the action tangent  space of the fiber of the homogeneous bundle.
\end{itemize}
\end{remark}

\begin{proposition}\label{p.3.21}
\begin{equation} 
N( Y_{\varphi _t (z)})   =  \frac{1}{2} .  \frac {m^{1/2}(\varphi _t (x, [v]))}{m^{1/2}(x, [v]) }.F(x_t, Z_t)\\ 
 \end{equation} 
and, when using the group action on $\Omega$, 
\begin{equation} 
N( Y_{\varphi _t (z)}) = f(z,t) . F(g_tx , g_tZ_t) 
 \end{equation}
 with 
 \begin{equation}\label{e.function-f} 
 f(z,t)  = \frac{1}{2} .  \frac {m^{1/2}(\varphi _t (x, [v]))}{m^{1/2}(x, [v]) }.
 \end{equation}
\end{proposition}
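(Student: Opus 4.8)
Proof plan for Proposition~\ref{p.3.21}.

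The plan is to \emph{assemble} the identity from the chain of computations carried out in the three paragraphs that precede the statement; no new geometric input is needed beyond Propositions~\ref{p.time-change}, \ref{p.explicit-change} and \ref{p.constant-curvature} and the $\Gamma$-equivariance of all the objects involved. First I would start from the definition $N(Y)=F(d\pi(H_X(Y)))$ of \eqref{e.norm-choice-2}, recall that $Y_{\varphi_t(z)}=\tau_t Y_{\tilde z}$ is the $X$-parallel transport of the chosen vertical vector, lift everything to the universal cover, and fix an affine chart \emph{adapted} to the orbit of $\tilde z$ in the sense of Remark~\ref{r.special-chart}, whose Euclidean generator $X_0$ satisfies $\tilde X=m\,X_0$ by Proposition~\ref{p.explicit-change}.

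Next I would apply Proposition~\ref{p.constant-curvature}: writing $Y=m^{-1/2}Y_0$ with $Y_0$ Euclidean-parallel by part~(2), part~(3) gives along this single orbit the relation $H_X(Y)=m^{1/2}H_{X_0}(Y_0)+L_{X_0}m^{1/2}\cdot Y_0$, so that applying $d\pi$ — which kills the vertical term $Y_0$ — leaves $d\pi(H_X(Y))=m^{1/2}\,d\pi(H_{X_0}(Y_0))$. Since $H_{X_0}$ commutes with the dynamical derivative of $X_0$ and $Y_0$ is Euclidean-parallel, the field $d\pi(H_{X_0}(Y_0))$ is \emph{constant} in the affine trivialization $T\Omega\simeq\Omega\times\mathbb{R}^n$; transported to the base point $x_t$ of $\varphi_t(\tilde z)$ it is exactly $Z_t=d\pi(\tau^0_t(H_{X_0}Y_{\tilde z}))$. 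Keeping track of the powers of $m$ produced by the normalization of $Y_{\tilde z}$ — this is where the ratio $m^{1/2}(\varphi_t(x,[v]))/m^{1/2}(x,[v])$ appears, together with the numerical constant $\tfrac12$ coming from the normalizations in \eqref{e.norm-choice-2} and in the definition of the Hilbert metric — and using the homogeneity of $F$, one obtains the first displayed identity.

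Finally, for the second identity I would fix a compact fundamental domain $\Delta$ (which, following \cite{Ma}, may be taken polyhedral in the affine chart) and, for each $t$, the unique $g_t=\Theta(\tilde z,t)\in\Gamma$ with $g_t\cdot\varphi_t(\tilde z)\in H\Delta$. Since $\Gamma$ acts on $\Omega$ by projective transformations preserving the Hilbert Finsler norm $F$, while $d\pi$ and $H_X$ are $\Gamma$-equivariant and $N(Y)_w=N(gY)_{g.w}$, one may push the base point $x_t$ back into $\Delta$ at the cost of replacing $Z_t$ by $g_t Z_t$, which turns the first identity into the second with $f(z,t)$ as in \eqref{e.function-f}. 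I do not expect a genuine obstacle here — the statement is essentially bookkeeping, and it is precisely this reformulation that reduces the Lyapunov analysis of Section~\ref{s.main} to the action of the monodromy group $\Gamma$ along geodesics of $\Omega$. The only points that need care are that the adapted chart depends on the orbit and that $m$ does not descend to $HM$ (so each value of $m$ must be read on the fixed lift $\tilde z$), and that the cocycle $t\mapsto g_t=\Theta(\tilde z,t)$ is discontinuous; both are harmless because all the relevant quantities ($F$, $H_X$, $v_X$ and $m$, all H\"older continuous) are evaluated over the compact fundamental domain $\Delta$, where they are bounded.
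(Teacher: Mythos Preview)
Your plan is correct and matches the paper's approach exactly: the proposition has no separate proof in the paper because it is a summary of the chain of computations carried out in the paragraphs of \S\ref{ss.Foulon} immediately preceding it, and your proposal reconstructs precisely those steps (adapted chart, Proposition~\ref{p.constant-curvature}(2)--(3), projection by $d\pi$ killing the vertical term, then $\Gamma$-equivariance to pass to the fundamental domain). Your remarks about the care points---orbit-dependence of the chart, non-descent of $m$, discontinuity of $t\mapsto g_t$---are also the same ones the paper flags.
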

%



\subsection{Explicit parametrisation of the geodesic flow}

\begin{lemma}
In an affine chart containing $\Omega$, the lift of Hilbert geodesic flow on $H \Omega$ is trivialized as $\varphi_t(x, [v]) = (x_t, [v])$, where 
 $(x_t)_{t\in\mathbb{R}}$ is the geodesic with corresponding endpoints $ (x^-, x^+)$ parametrised by $t\in\mathbb{R}$ in such a way that 
 \begin{equation}
\vert x-x_t \vert =\frac{ \vert x^- - x \vert . \vert x-x^+ \vert .( 1- \exp(-2t))}{\vert x^- - x \vert + \vert x-x^+ \vert . \exp(-2t)}.
 \end{equation}
 \end{lemma}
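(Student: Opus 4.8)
The plan is to reduce the statement to a direct computation with the cross-ratio defining $d_\Omega$, using the fact recalled in Section~\ref{s.preliminaries} that on a strictly convex $\Omega$ the geodesics of the Hilbert metric are precisely the affine straight segments. First I would pin down the qualitative form of the flow: the projective line through $x$ with direction $v$ is the unique $d_\Omega$-geodesic through $(x,[v])$ and is a straight segment in the chart, so $\varphi_t(x,[v])$ lies on this line, and since the flow does not change the direction of a tangent vector we may write $\varphi_t(x,[v])=(x_t,[v])$ with $x_t$ on the open segment from $x^-$ to $x^+$. As $(\varphi_t)$ is the geodesic flow of the Hilbert metric on the homogeneous bundle, it is parametrised by Hilbert arclength (the normalisation consistent with $\tilde X=m\,X_0$ and with the $e^{\pm t}$ of Section~\ref{sec:derivative-Hilbert-flow}); thus $x_t$ is characterised by $d_\Omega(x,x_t)=|t|$, lying on the $x^+$-side of $x$ for $t>0$ and on the $x^-$-side for $t<0$. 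Note that the endpoints of the geodesic through $x_t$ in direction $v$ are still $x^-$ and $x^+$, since $x_t$ stays on the same segment.

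Next I would turn this into an equation. Write $p=|x^--x|$, $q=|x-x^+|$ and let $u=|x-x_t|$, with the sign convention that $u>0$ towards $x^+$. For $t\ge 0$ one has $u\ge 0$, $|x^+-x_t|=q-u$ and $|x_t-x^-|=p+u$, so substituting $a=x$, $b=x_t$, $a^-=x^-$, $b^+=x^+$ into the definition of $d_\Omega$,
$$t=d_\Omega(x,x_t)=\tfrac12\log\!\left(\frac{|x^+-x|}{|x^+-x_t|}\cdot\frac{|x_t-x^-|}{|x-x^-|}\right),$$
gives $e^{2t}=\dfrac{q(p+u)}{p(q-u)}$. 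This is affine in $u$: clearing denominators yields $u\,(q+e^{2t}p)=pq\,(e^{2t}-1)$, hence
$$u=\frac{pq\,(e^{2t}-1)}{q+e^{2t}p}=\frac{pq\,(1-e^{-2t})}{p+q\,e^{-2t}},$$
which, after substituting back $p=|x^--x|$ and $q=|x-x^+|$, is exactly the claimed formula.

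Finally I would treat $t<0$ by the mirror computation: now $x_t$ lies between $x^-$ and $x$ (order $x^-,x_t,x,x^+$), and applying the same distance formula with the two points interchanged and solving again gives $|x-x_t|=\dfrac{pq(e^{-2t}-1)}{p+e^{-2t}q}$, i.e. the negative of the right-hand side above, in agreement with the sign convention; so a single formula covers all $t\in\mathbb{R}$. I do not expect a genuine obstacle here: the only delicate points are bookkeeping ones, namely the normalisation of $\varphi_t$ to unit Hilbert speed, the factor $\tfrac12$ in the definition of $d_\Omega$ (responsible for $e^{-2t}$ rather than $e^{-t}$), and the orientation convention fixing which boundary point is $x^+$.
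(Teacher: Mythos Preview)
Your proof is correct and follows exactly the route the paper takes: the paper's proof consists of the single line ``This follows from the fact that $d_{\Omega}(x,x_t)=t$'' together with a pointer to \cite[Lemma 4.5]{Cr2}, and you have simply written out that cross-ratio computation in full. The only minor remark is that once you introduce the signed quantity $u$ (positive towards $x^+$), the identity $e^{2t}=\dfrac{q(p+u)}{p(q-u)}$ already holds for all $t\in\mathbb{R}$, so the separate $t<0$ discussion is unnecessary.
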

 
\begin{proof}
This follows from the fact that $d_{\Omega} (x,x_t)= t$: see, e.g., \cite[Lemma 4.5]{Cr2}.
\end{proof} 

We may use this expression of the geodesic flow together with the definition of $m$ in Proposition \ref{p.explicit-change} to compute the function $f$ introduced in \eqref{e.function-f}: 
\begin {lemma}\label{l.3.14}
\begin{equation} 
 f((x,[v]),t)  =  \frac{1}{2} . \frac {\vert x^- - x^+ \vert }
{\vert x^- - x \vert. \exp(t) + \vert x-x^+  \vert . \exp(-t)} \end{equation}
\end{lemma}
\begin{proof} This is a straightforward computation with the previous formulas. 
\end{proof}

\subsection{Lyapunov exponents along periodic orbits}
Periodic orbits of the Hilbert flow were studied by Benoist \cite{B1} and their Lyapunov exponents were computed by Crampon \cite{Cr2}. More concretely, consider a closed orbit $\gamma_g$ of the (Hilbert) geodesic flow associated to a group element $g\in \Gamma$ and denote by $T =  L ^H(g)$ the length of $\gamma_g$ (with respect to the Hilbert metric). After Benoist, the element $g$ is bi-proximal. Furthermore, Crampon showed that if $x$ is a point of $\gamma_g$, then the parallel transport by time $T =  L ^H(g)$ of a vector $ Z \in T_{x}S_{x^+}(x)$ tangent to the stable horosphere at $x$ with point at infinity $x^+$ satisfies: 
 
\begin{proposition} \label{pro:exp-periodic-orbit} Let  $g\in \Gamma$. The eigenvalues of the matrix associated to $ g$ have moduli (possibly repeated with multiplicity) of the form 
  $ (\lambda_0 > \dots\geq \lambda_i \geq \dots > \lambda_n)$. Moreover, if $E_i$ denotes the generalised eigenspace associated to eigenvalues with the same modulus of $\lambda_i$, then the Hilbert parallel transport $ \tau^H_T$ along the associated  closed orbit $ \gamma_g$ for time $T=L^H(g)$ preserves a splitting 
$$ THM = \mathbb{R}X \oplus\bigoplus_{i} T_iHM $$
where   $$ T_iHM= V_iHM \oplus {H_X} (V_iHM) \quad \textrm{ and } \quad d\pi {H_X} (V_iHM) = E_i.$$
Furthermore, the parallel Lyapunov exponent for $X_i \in T_iHM $ is 
$$ \eta_i = 1 + 2 \frac{\log \frac{\lambda_n}{\lambda_i}}{\log{\frac{\lambda_0}{\lambda_n}}} = -1 + 2 \frac{\log \frac{\lambda_0}{\lambda_i}}{\log{\frac{\lambda_0}{\lambda_n}}}.$$
In particular 
$$ -1 <  \eta_i < 1.$$ 
\end{proposition}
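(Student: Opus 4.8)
The plan is to compute the parallel transport $\tau^H_T$ along $\gamma_g$ by exploiting the fact — established through Proposition \ref{p.constant-curvature} (3) and Proposition \ref{p.3.21} — that on the universal cover, and after projecting horizontal vector fields to the base $\Omega$ via $d\pi$, the Hilbert parallel transport becomes the Euclidean parallel transport twisted by the scalar cocycle $f(z,t)$ and composed with the action of $g$ on $T\Omega$. Concretely, if $x$ lies on $\gamma_g$ with endpoints $x^\pm$, then $g$ is bi-proximal (Benoist) with $x^+$ and $x^-$ the attracting and repelling fixed points of $g$ on $\partial\Omega$; the moduli of the eigenvalues of $g$ are $\lambda_0>\cdots>\lambda_n$, with $x^+$ corresponding to $\lambda_0$ and $x^-$ to $\lambda_n$. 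A horizontal vector $Z\in T_x S_{x^+}(x)$, once transported around the loop, returns to $g^{-1}Z$ up to the Euclidean trivialization, so that the vertical-norm cocycle reads $N(Y_{\varphi_T(z)}) = f(z,T)\,F(g^{-1}x,\,g^{-1}Z)$ (equivalently $f(z,T)\,F(gx,gZ)$ depending on orientation convention). First I would pin down the exact relation $T = L^H(g)$ and the Hilbert length in terms of the eigenvalue moduli: Benoist's formula gives $L^H(g) = \tfrac12\log(\lambda_0/\lambda_n)$, which will supply the denominator in the final expression.

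Next I would diagonalize (or put in generalized-eigenspace form) the action of $g$ on $T_{x^+}\Omega \cong \mathbb{R}^n$: the horizontal distribution at $x$ over $z=(x,[v])$ projects to $T_{x^+}\Omega$ (this is recorded in Section \ref{ss.Crampon} and Proposition \ref{p.3.21}), and under $g$ the generalized eigenspace $E_i$ is scaled by $\lambda_i$ relative to the projective normalization. So for $X_i\in T_iHM$ with $d\pi H_X(V_iHM)=E_i$, the growth of $F(g^k\cdot Z_i)$ over $k$ iterates of the loop is $(\lambda_i/\lambda_0)^k$ in the affine chart adapted to $x^+$ (the $\lambda_0$ normalization coming from the fact that we work in a fixed affine chart where $x^+$ is a finite point and $g$ fixes it projectively). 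Combining this with the scalar factor $f(z,T)^k$ — which I compute from Lemma \ref{l.3.14}, using that for a point on the axis one has $|x^--x|\exp(T)+|x-x^+|\exp(-T)$ controlled and that over $kT$ the factor behaves like $(\lambda_n/\lambda_0)^{k/2}$ up to bounded multiplicative error — gives the parallel Lyapunov exponent
$$\eta_i = \lim_{k\to\infty}\frac{1}{kT}\log\big(f(z,kT)\,F(g^k Z_i)\big) = \frac{\tfrac12\log(\lambda_n/\lambda_0) + \log(\lambda_i/\lambda_0)}{\tfrac12\log(\lambda_0/\lambda_n)},$$
which after simplification is exactly $-1 + 2\,\dfrac{\log(\lambda_0/\lambda_i)}{\log(\lambda_0/\lambda_n)}$, and the two displayed forms for $\eta_i$ agree by adding and subtracting $\log\lambda_n$. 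The splitting $THM = \mathbb{R}X\oplus\bigoplus_i T_iHM$ with $T_iHM = V_iHM\oplus H_X(V_iHM)$ is then just the lift of the $g$-eigenspace decomposition through $v_X$ and $H_X$, which are $X$-parallel by Proposition \ref{p.constant-curvature}, hence preserved by $\tau^H_T$.

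The inequality $-1<\eta_i<1$ is then immediate: since $\lambda_0>\lambda_i>\lambda_n$ we have $0<\log(\lambda_0/\lambda_i)<\log(\lambda_0/\lambda_n)$, so the ratio lies strictly in $(0,1)$ and $\eta_i\in(-1,1)$. I expect the main obstacle to be bookkeeping the normalizations correctly — that is, making sure the projective action of $g$ on the horizontal fibers picks up precisely the ratio $\lambda_i/\lambda_0$ (and not $\lambda_i$ or $\lambda_i/\lambda_n$), and that the scalar cocycle $f$ contributes exactly the $m^{1/2}$-power appearing in Proposition \ref{p.3.21} rather than the $m^{-1/2}$-power of the earlier vertical-transport formula. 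Once the adapted chart of Remark \ref{r.special-chart} is fixed and one tracks through Lemma \ref{l.3.14} with $x$ on the axis of $g$, these factors combine cleanly; the asymptotics over $kT$ only require that the bounded fundamental-domain errors and the non-attracting/non-repelling pieces of $g^k$ do not affect the exponential rate, which follows from bi-proximality.
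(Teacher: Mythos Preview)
Your overall strategy matches the paper's: reduce the Hilbert parallel transport to the scalar cocycle $f(z,t)$ times the projective action of $\Gamma$ on the tangent space (Proposition~\ref{p.3.21}), and then read off the exponent from the eigenvalue data of $g$. However, your displayed computation contains a sign error that you do not catch. The quantity
\[
\frac{\tfrac12\log(\lambda_n/\lambda_0) + \log(\lambda_i/\lambda_0)}{\tfrac12\log(\lambda_0/\lambda_n)}
\]
simplifies to $-1 - 2\,\dfrac{\log(\lambda_0/\lambda_i)}{\log(\lambda_0/\lambda_n)}$, not $-1 + 2\,\dfrac{\log(\lambda_0/\lambda_i)}{\log(\lambda_0/\lambda_n)}$; with $\lambda_0>\lambda_i>\lambda_n$ this lands in $(-3,-1)$, contradicting $-1<\eta_i<1$. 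The mistake traces to your claim that the horizontal piece grows like $(\lambda_i/\lambda_0)^k$. The element $g_T$ sending $\varphi_T(\tilde z)$ back to the fundamental domain is $g^{-1}$, and the differential of the \emph{projective} action of $g^{-1}$, evaluated along the axis (not at the fixed point $x^+$), acts on $E_i$ with modulus $(\lambda_0 u + \lambda_n v)/\lambda_i$; iterating $k$ times gives asymptotics $(\lambda_0/\lambda_i)^k$, the reciprocal of what you wrote. Computing the derivative of $g$ at its attracting fixed point $x^+$, as you do, yields the contraction rate \emph{toward} $x^+$ rather than the return-to-$x$ rate required by the formula $N(Y_{\varphi_t(z)}) = f(z,t)\,F(g_t x,\, g_t Z_t)$ with $g_t x$ staying in the compact fundamental domain.

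The paper's own treatment (Appendix~\ref{a.1}, for diagonalisable $g$; the general statement is quoted from Crampon) sidesteps this normalization trap by computing $\tau^H_T$ for a \emph{single} period exactly rather than asymptotically. One gets $f((x,[v]),T) = \tfrac12\,(\lambda_0\lambda_n)^{1/2}/(\lambda_0 u + \lambda_n v)$ from Lemma~\ref{l.3.14} together with Proposition~\ref{p.3.24}, and $dg^{-1}(Z_i) = \tfrac{\lambda_0 u + \lambda_n v}{\lambda_i}\,U_i^{-1}Z_i$ from the explicit projective formula~\eqref{e.3.17}; the factors $(\lambda_0 u + \lambda_n v)$ cancel exactly, giving $\tau^H_T|_{T_iHM} = (\lambda_0\lambda_n)^{1/2}\lambda_i^{-1}\,U_i^{-1}$, and dividing the logarithm by $T = \tfrac12\log(\lambda_0/\lambda_n)$ produces $\eta_i$. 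Your asymptotic route also works once you replace $(\lambda_i/\lambda_0)^k$ by $(\lambda_0/\lambda_i)^k$, but the one-period computation is cleaner and forces the correct bookkeeping that you yourself flagged as the main hazard.
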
 

\begin{remark} In the case of a \emph{diagonalisable} element $g\in\Gamma$, an \emph{exact} formula for $\tau_T^H$ is available: see Appendix \ref{a.1} below for more details.  
\end{remark}


\section{Proof of Theorem \ref{t.A}}\label{s.main}

Let us define the locally constant cocycle over the Hilbert flow $(\varphi_t)_{t\in\mathbb{R}}$ of $M$ induced by the natural representation $\rho:\pi_1(M)\to \Gamma < SL(n+1,\mathbb{R})$.
In plain terms, on the trivial bundle $\hat{E} = TM\times \mathbb{R}^{n+1}$, we have the trivial cocycle $A^t(x).u = (\varphi_t(x), u).$  The group $\Gamma$ acts on $\hat{E}$  by
 $\gamma(x,u) = (\gamma(x), \rho(\gamma).u)$. Since the action of $\Gamma$ and $\varphi_t$ commute, it induces a cocycle on $E = \hat{E}/\Gamma$ which is denote by $\rho$ (in a slight abuse of notation).

\begin{lemma} \label{lem:link cocycles}
Given $\mu$ an ergodic invariant measure by the Hilbert geodesic flow on a compact $n$-dimensional manifold, the Lyapunov exponents of the parallel transport are related to the Lyapunov exponents of the cocycle $\rho$ by the following formula
\begin{equation}
\eta_i = -1 + 2\frac{\ell_0 -\ell_i}{\ell_0 -\ell_{n}}
\end{equation} 
where $\eta_1 \leq \cdots \leq \eta_{n-1}$ are the $(n-1)$ Lyapunov exponents of the parallel transport and $\ell_0  \leq \cdots \leq \ell_{n}$ are the $(n+1)$ Lyapunov exponent of the cocycle $\rho$.
\end{lemma}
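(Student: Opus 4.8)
The plan is to feed the explicit description of the parallel transport obtained in Proposition~\ref{p.3.21} and Lemma~\ref{l.3.14} into the Oseledets decomposition of the locally constant cocycle $\rho$. Fix a $\mu$-generic, Oseledets-regular point $z=(x,[v])\in HM$, lift it to $\tilde z\in H\Delta$, and for each $t$ let $g_t\in\Gamma\subset SL(n+1,\mathbb{R})$ be the (piecewise constant in $t$) element with $g_t\,\varphi_t(\tilde z)\in H\Delta$. Let $Y$ be an $X$-parallel vertical vector field along the orbit of $z$, and let $Z_t$ be the associated Euclidean-parallel horizontal vector in an affine chart containing $\Omega$; in the affine trivialisation $T\Omega=\Omega\times\mathbb{R}^n$ the field $Z_t\equiv Z$ is constant, and Proposition~\ref{p.3.21} combined with Lemma~\ref{l.3.14} gives
\[
N\big(Y_{\varphi_t(z)}\big)=f(z,t)\,F\big(g_t x_t,\,Dg_t(Z)\big),\qquad f\big((x,[v]),t\big)=\frac12\cdot\frac{|x^--x^+|}{|x^--x|\,e^{t}+|x-x^+|\,e^{-t}}.
\]
Since $\tfrac1t\log f(z,t)\to-1$ as $t\to+\infty$, the parallel Lyapunov exponent carried by $Y$ equals $-1$ plus the exponential growth rate of $F\big(g_t x_t,\,Dg_t(Z)\big)$, so everything reduces to understanding this last quantity.

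Next I would translate $F\big(g_t x_t,\,Dg_t(Z)\big)$ into data of the linear cocycle $\rho$. Writing $\widehat{(\cdot)}$ for the affine lift to $\mathbb{R}^{n+1}$ and using the formula for the differential of a projective transformation in an affine chart,
\[
Dg(y)\cdot Z=\frac{1}{(g\widehat y)_{n+1}}\Big[(g\widehat Z)'-(g\widehat Z)_{n+1}\,g(y)\Big],\qquad \widehat Z=(Z,0),
\]
where $(\cdot)'$ denotes the first $n$ coordinates. Because $g_t x_t$ stays in the compact polyhedral fundamental domain $\Delta$, the Finsler norm $F(g_t x_t,\cdot)$ is uniformly comparable to a fixed Euclidean norm, and likewise $|(g_t\widehat{x_t})_{n+1}|\asymp\|g_t\widehat{x_t}\|$. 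Hence
\[
\tfrac1t\log F\big(g_t x_t,\,Dg_t(Z)\big)=\tfrac1t\log\big\|g_t\widehat Z\big\|-\tfrac1t\log\big\|g_t\widehat{x_t}\big\|+o(1),
\]
and from the explicit geodesic parametrisation $\widehat{x_t}$ stays on the projective segment $[\widehat{x^-},\widehat{x^+}]$ and tends to $\widehat{x^+}$ at exponential rate.

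The remaining inputs are structural facts about $\rho$. The boundary maps $z\mapsto\widehat{x^{\pm}}(z)$ are measurable and $\varphi_t$-invariant, hence subordinate to the Oseledets decomposition of $\rho$; combining the invariance of $\widehat{x^{\pm}}$, the equivariance of the Oseledets splitting, the Anosov splitting $E^{\pm}$ of $\varphi_t$ (whose image under $d\pi$ recovers $T_{x^{\pm}}\partial\Omega$), and --- as a consistency check on the dense set of periodic orbits --- Proposition~\ref{pro:exp-periodic-orbit}, one gets that $\widehat{x^+}$ lies in the lowest Oseledets space of $\rho$, of exponent $\ell_0$, and $\widehat{x^-}$ in the highest, of exponent $\ell_n$. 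On the flow-invariant plane $\mathbb{R}\widehat{x^+}\oplus\mathbb{R}\widehat{x^-}$ the cross-ratio definition of the Hilbert metric forces the logarithmic eigenvalue gap of $g_t$ to be $2t+o(t)$, i.e. $\ell_n-\ell_0=2$; together with the standard sub-exponential lower bound on the angle between distinct Oseledets subspaces this yields $\tfrac1t\log\|g_t\widehat{x_t}\|\to\ell_0$. Finally, as $Y$ ranges over the Oseledets flag of the parallel transport on $VHM$, the vectors $\widehat Z$ range over the complementary middle block $\bigoplus_{1\leq i\leq n-1}L_i$ of $\mathbb{R}^{n+1}$ --- the same bookkeeping already present in Proposition~\ref{p.constant-curvature} and Proposition~\ref{pro:exp-periodic-orbit} --- so $\tfrac1t\log\|g_t\widehat Z\|\to\ell_i$. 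Collecting the three growth rates gives $\eta_i=-1+(\ell_i-\ell_0)=-1+2\,\dfrac{\ell_0-\ell_i}{\ell_0-\ell_n}$, where in the last equality we use $\ell_0-\ell_n=-2$.

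I expect the main obstacle to be the structural step of the previous paragraph: for an \emph{arbitrary} ergodic invariant measure one must genuinely prove that the invariant line fields $\widehat{x^{\pm}}$ land in the \emph{extremal} Oseledets subspaces of $\rho$ and establish the normalisation $\ell_n-\ell_0=2$, rather than merely read these facts off periodic orbits (Lyapunov exponents being discontinuous in the measure). The accompanying bookkeeping --- the dependence of the affine lift $\widehat{x^{\pm}}$ on the point through the adapted charts of Remark~\ref{r.special-chart}, the control of the cancellation in $(g\widehat Z)'-(g\widehat Z)_{n+1}g(y)$, and the $o(t)$ error terms --- is routine.
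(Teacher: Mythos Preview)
Your approach is genuinely different from the paper's. The paper's proof is two lines: the formula holds on periodic orbits by Proposition~\ref{pro:exp-periodic-orbit}, and for an arbitrary ergodic measure one invokes Kalinin's approximation theorem \cite{Ka}, which says that for a H\"older (here, locally constant) linear cocycle over a hyperbolic system the Lyapunov spectrum of any ergodic invariant measure is a limit of spectra of periodic-orbit measures. Since the relation $\eta_i=-1+2(\ell_0-\ell_i)/(\ell_0-\ell_n)$ is preserved under such limits (applied simultaneously to the parallel-transport cocycle and to $\rho$), the general case follows.

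You instead attempt a direct Oseledets computation at a $\mu$-generic point. The strategy is sound and would yield an independent proof, but the step you flag as the ``main obstacle'' is indeed a genuine gap as written: you need, for a \emph{general} ergodic $\mu$, that the invariant line fields $\widehat{x^{\pm}}$ lie in the extremal Oseledets subspaces of $\rho$ and that $\ell_n-\ell_0=2$. Your justification mixes invariance arguments with a ``consistency check on periodic orbits'', but invariance alone only places $\widehat{x^{\pm}}$ in \emph{some} Oseledets subspace, not necessarily the extremal one, and you correctly note that one cannot simply pass the periodic-orbit identification to the limit by continuity. Ironically, the tool that \emph{does} let one pass from periodic orbits to general measures here is precisely Kalinin's theorem---which is the whole of the paper's argument. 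So either you import Kalinin (at which point the direct computation becomes superfluous), or you must supply a self-contained proof of extremality and of the normalisation; both are true (they amount to identifying $\widehat{x^{+}}$ with the attracting fixed point of $g_t$ on $P(\mathbb{R}^{n+1})$ and reading the gap off the Hilbert length, as in Proposition~\ref{p.3.24}, uniformly along generic orbits), but they require more than what you have written.
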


\begin{proof}
The result is true for measures supported by periodic orbits by the result of Crampon (cf. Proposition \ref{pro:exp-periodic-orbit} above). Since the Hilbert flow is an Anosov flow, it is also true for every ergodic invariant probability measure because one can approximate Lyapunov exponents of every ergodic invariant measures by the Lyapunov exponents of measures supported by periodic orbits (see \cite[Theorem 1.4]{Ka}).
\end{proof}

 The desired Theorem \ref{th-main} is an immediate consequence of the following result: 

\begin{theorem}\label{t.B} If $\Omega$ is not an ellipsoid, then the Lyapunov spectrum of the cocycle associated to $(\varphi_t,\rho)$ with respect to any equilibrium state of $\varphi_t$ derived from a H\"older continuous potential is simple.
\end{theorem}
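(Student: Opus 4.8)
The goal is to establish simplicity of the Lyapunov spectrum of the locally constant cocycle $\rho$ over the Hilbert geodesic flow, with respect to an arbitrary equilibrium state $\mu_f$ of a Hölder potential $f$. The natural framework is the criterion for simplicity of Lyapunov exponents of linear cocycles over hyperbolic systems in the spirit of Bonatti--Viana and Avila--Viana (and, in the flow setting, the adaptation by e.g. Butler or the "pinching and twisting" mechanism): one must exhibit, for the holonomy-invariant family of measures on the Grassmannians $\mathrm{Gr}(k,\mathbb{R}^{n+1})$ coming from the stable/unstable product structure of $\mu_f$ on $\partial^{(2)}\Omega$, a monoid of matrices in $\rho(\Gamma)$ which is \emph{pinching} (contains a matrix with $n+1$ distinct-modulus eigenvalues — a bi-proximal element, which exists by Benoist as recalled after Proposition \ref{pro:exp-periodic-orbit}) and \emph{twisting} (the unstable/stable flags of this pinching element are moved into general position by other elements of the group). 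Since $\mu_f$ has a product structure $\nu^s\otimes\nu^u$ which is quasi-invariant under the $\Gamma$-action and whose conditionals are the stable/unstable holonomy-invariant measures of the cocycle, the standard machinery reduces simplicity exactly to pinching $+$ twisting of $\rho(\Gamma) = \Gamma < SL(n+1,\mathbb{R})$ on projective space and its Grassmannians.

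First I would set up the reduction carefully: the locally constant nature of the cocycle means that its stable and unstable holonomies along the strong-stable/strong-unstable manifolds are \emph{constant} (equal to elements of $\Gamma$), so the $u$-states and $s$-states of the cocycle are identified with $\Gamma$-equivariant families of probability measures on the flag varieties over $\partial\Omega$, and the invariance principle applies cleanly. This is where the Hopf parametrization of $\mu_f$ from Section \ref{ss.SRB-BM} enters: the product structure guarantees the required disintegration and the quasi-invariance needed to run the argument. Second, I would verify \emph{pinching}: Benoist's theorem that $\Gamma$ contains bi-proximal elements gives an element $g_0\in\Gamma$ all of whose eigenvalues have distinct moduli; this is exactly a fully pinching element for the action on every $\mathrm{Gr}(k,\mathbb{R}^{n+1})$. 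Third — and this is the crux — I would verify \emph{twisting}: one needs that $\Gamma$ is "large enough" that no proper collection of subspaces spanned by eigendirections of $g_0$ is invariant under all of $\Gamma$; concretely, that for the attracting/repelling flags $\xi^+_{g_0},\xi^-_{g_0}$ of $g_0$ there are elements $\gamma\in\Gamma$ with $\gamma\cdot\xi^+_{g_0}$ in general position with respect to $\xi^-_{g_0}$ in every relevant Grassmannian. Here the decisive input is Benoist's Zariski-density theorem: a non-conical strictly convex divisible $\Omega$ has $\Gamma$ Zariski-dense in $SL(n+1,\mathbb{R})$ (this is precisely the fact alluded to in the Hilbert-form remarks — the obstructions from periodic orbits vanish only when $\Gamma$ is not Zariski dense, i.e.\ in the ellipsoid case). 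Zariski-density immediately yields the twisting condition, since the set of $\gamma$ failing to put the flags in general position is a proper Zariski-closed subset.

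**The main obstacle.** The genuinely non-routine point is the low regularity: the Hilbert flow and its Anosov splitting are only $C^{1+\kappa}$, so the classical simplicity criteria — usually stated for smooth or at least $C^1$ cocycles with well-behaved holonomies over a smooth hyperbolic system — are not directly applicable off the shelf. I would handle this by exploiting that the cocycle $\rho$ is \emph{locally constant} (it takes values in the discrete group $\Gamma$), which makes its holonomies honestly locally constant and sidesteps any Hölder-regularity estimate for holonomies; the only place regularity matters is in the existence and product structure of the equilibrium state $\mu_f$, which is already supplied by Section \ref{ss.SRB-BM}. A secondary technical point is passing from the discrete-time/symbolic version of the simplicity criterion to the flow: one reduces to a Poincaré return map to a suitable cross-section (the boundary of the polyhedral fundamental domain $\Delta$, following \cite{Ma}), on which the return cocycle is again locally constant with values in $\Gamma$, and $\mu_f$ induces an equilibrium-type measure with the same product structure. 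Once these two reductions are in place, pinching (Benoist's bi-proximality) and twisting (Benoist's Zariski-density) finish the proof; translating back through Lemma \ref{lem:link cocycles} then gives simplicity of the parallel-transport exponents and hence Theorem \ref{th-main} and Corollary \ref{t.A}.
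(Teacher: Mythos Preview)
Your proposal is correct and follows essentially the same strategy as the paper: reduce the flow cocycle to a locally constant cocycle over a discrete hyperbolic base (the paper uses a Markov coding into a subshift of finite type rather than a Poincar\'e section), invoke the local product structure of equilibrium states, and apply the Bonatti--Viana simplicity criterion with pinching supplied by a loxodromic element and twisting supplied by Benoist's Zariski-density theorem. The only place the paper is more explicit is in turning the twisting element into a genuine homoclinic word of the shift, which it does via the Abels--Margulis--Soifer simultaneous transversality lemma together with a finite list of ``transition words''; your Zariski-closure argument captures the algebraic content of that step.
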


\begin{proof}[Proof of Theorem \ref{th-main} assuming Theorem \ref{t.B}] 

We first reduce our study to the Lyapunov exponents of the parallel transport since it only eliminates the redundancy of what happens on the stable and unstable manifold (see Section \ref{sec:derivative-Hilbert-flow}). 
Moreover, by Lemma  \ref{lem:link cocycles}, the simplicity of the Lyapunov spectrum of the parallel transport is equivalent to the simplicity of the Lyapunov spectrum of the cocycle $\rho$. Thus, it is enough to prove Theorem \ref{t.B}.
\end{proof}

\begin{proof}[Proof of Theorem \ref{t.B}]
Since the Hilbert flow $(\varphi_t)_{t\in\mathbb{R}}$ is a topologically mixing Anosov flow (cf. \cite{B1}), the cocycle $(\varphi_t,\rho)$ is coded by a locally constant cocycle over a subshift of finite type, say $(\sigma, \rho)$, where $\sigma$ acts by left-shift on the elements of $\Sigma\subset\mathcal{A}^{\mathbb{Z}}$ (and $\mathcal{A}$ is a finite alphabet). 

As it is explained in \cite[Section 3]{BGMV}, the equilibrium states $\mu$ of $\varphi_t$ with respect to H\"older continuous potentials are coded by measures $\nu$ with a \emph{local product structure}. By definition, the Lyapunov exponents of $(\varphi_t,\rho)$ with respect to $\mu$ are simple if and only if the Lyapunov exponents of $(\sigma, \rho)$ with respect to $\nu$ are simple. In particular, the desired statement can be deduced from the simplicity criterion of Bonatti and Viana \cite{BV} (see also \cite{AV}) provided we ensure that $(\sigma, \rho)$ is \emph{typical} in the sense that: 
\begin{enumerate}
\item there is a periodic point $p_{\theta}\in\Sigma$ of $\sigma$ associated to a finite word $\theta=(\theta_0,\dots,\theta_k)$ such that the matrix $\rho(\theta):=\rho(\theta_k)\dots\rho(\theta_1)\in SL(n+1,\mathbb{R})$ is \emph{loxodromic};  
\item \label{item:homoclinic} there is an homoclinic point $z=(\overline{\theta}z_0z_1\cdots z_{\ell}\overline{\theta})\in\Sigma$ of $p_{\theta}$ such that, for any $1\leq k\leq n$ and any pair $F, G\subset\mathbb{R}^{n+1}$ of $\rho(\theta)$-invariant subspaces of complementary dimensions $k$ and $n+1-k$, one has $\rho(z_0\cdots z_{\ell})(F)\cap G=\{0\}$.  
\end{enumerate} 
As it turns out, Benoist \cite{B1} proved that $\rho(\pi_1(M))=\Gamma$ is \emph{Zariski-dense} in $SL(n+1,\mathbb{R})$ when $\Omega$ is not an ellipsoid. In particular, the existence of (many) periodic points $p_{\theta}$ such that $\rho(\theta)$ is loxodromic is assured following \cite{GM}. 

 The simultaneous transversality lemma of Abels--Margulis--Soifer \cite{AMS} (see also \cite[Section 2]{BS}) says that given an integer $N$ and a finite family of vectors $v_i$ and hyperplanes $H_i$, with 
 $1\leq i \leq N$, there exists $\gamma \in \Gamma$ such that, for every $(i, j) \in \{1, \cdots, N\}$, $\rho(\gamma)v_i \notin H_j$. 
 
In the sequel, we employ the simultaneous transversality lemma of Abels--Margulis--Soifer to construct the relevant homoclinic point. 
 Since $\Sigma$ is a shift of finite type,  we can fix a list of finite words $w_1,\dots, w_m$ such that for any finite word $(\zeta_1,\dots,\zeta_r)$ there are $1\leq i, j\leq m$ with $\overline{\theta}w_i\zeta_1\dots \zeta_rw_j\overline{\theta}\in\Sigma$. These words $w_1,\dots, w_m$ correspond to the transitions from $\theta$ to and from any other state of the graph associated to $\Sigma$. Let $(e_1, \cdots, e_{n+1})$ be a basis $\B$ of eigenvectors of the loxodromic element $\rho(p)$, $\F$ be the collection of vectors $\rho(w_j)(e_k)$, where $1 \leq j \leq m$ and 
$1 \leq k \leq  n+1$ and $\H$ the collection of hyperplane $\rho(w_i)^{-1}(H)$ where $H$ is an hyperplane generated by $n$ element of the basis $\B$ and $1 \leq i \leq m$.
We choose $\gamma$ satisfying the simultaneous transversality lemma of Abels-Margulis-Soifer for the collection of vector $\F$ and the collection of hyperplane $\H$. There exists $1\leq i, j\leq m$  such that the word $w_i\gamma w_j$ belongs to the language of $\Sigma$ and for each $v \in \F$ and $H \in \H$, 
$\rho(w_i\gamma w_j)(v) \notin H$. 

To conclude that $z=w_i\gamma w_j$ is the relevant homoclinic point, we have to prove that, for $F$ and $G$ of complementary dimensions both generated by elements of $\B$, we have 
$\rho(z)(F) \cap G = \{0\}.$ To return to the previous case, we consider exterior powers.  If $\textrm{dim}(F) = k \leq  \textrm{dim}(G) = n+1 -k$, we consider the action of $\Gamma$ on the exterior power $\Lambda^k(\mathbb{R}^{n+1})$.
Assume that up to permutation $F$ is generated by $(e_{i_1}, \cdots, e_{i_k})$ and $G$ is generated by $(e_{k+1}, \cdots, e_{n+1})$. If, by contradiction $\rho(z)(F) \cap G \neq \{0\}$, it means that $\rho(z)(e_1) \wedge \cdots \wedge \rho(z)(e_k)$ belongs to the hyperplane of $\Lambda^k(\mathbb{R}^{n+1})$ generated by all  the elementary wedges except $e_1 \wedge \cdots \wedge e_k$. Consequently, we repeat the previous argument and apply the simultaneous transversality lemma of Abels--Margulis--Soifer for the action of $\rho(z)$ on $\Lambda^k(\mathbb{R}^{n+1})$. This is possible since the transversality lemma can be applied  simultaneously for a finite number of irreducible representations (see for instance \cite[Section 2]{BS}). 
Thus $z$ satisfies the hypothesis of item \eqref{item:homoclinic} of the simplicity criterion. This ends the proof of Theorem \ref{t.B}.
\end{proof}

\begin{proof}[Proof of Corollary \ref{t.A}] Let $M=\Omega/\Gamma$ be a compact $n$-dimensional manifold with a real projective structure induced by a strictly convex subset $\Omega$ of $P(\mathbb{R}^{n+1})$ which is divisible by $\Gamma < SL(n+1,\mathbb{R})$. Our discussion in \S\ref{ss.Crampon} and \S\ref{ss.SRB-BM} says that the statement of Corollary \ref{t.A} is equivalent to the simplicity of the positive Lyapunov exponents of the Hilbert flow with respect to the SRB measure $\mu_{SRB}$ since its disintegrations along almost all unstable manifolds are absolutely continuous and, hence, the set of Oseledets regular points $(x,v)$ of $\mu_{SRB}$ leads to a subset of points $x^+$ of $\partial \Omega$ of full Lebesgue measure (compare with \cite[pp. 2380]{FK}, and see also \cite{FK1} for more details about $\mu_{SRB}$ for real projective structures).
\end{proof}

\begin{remark} The Hopf parametrization of the Bowen--Margulis measure is absolutely continuous with respect $\nu_o\otimes\nu_o$, where $\nu_o$ is a Patterson--Sullivan measure at any point $o\in\Omega$. If we replace $\mu_{SRB}$ by the Bowen--Margulis measure $\mu_{BM}$ in the discussion above, then we get the simplicity of the approximate regularity indices $\alpha_i(\mu_{BM})$, $i=1,\dots, n-1$, at $\nu_o$-almost every point of $\partial\Omega$ when $\Omega$ is not an ellipsoid. In particular, for $n>2$, the $\alpha_i(\mu_{BM})$'s are all equal to $1$ if and only if $\Omega$ is an ellipsoid (and this gives a positive answer to a question raised by Crampon in \cite{Cr3}). 
\end{remark}

\begin{remark} 
Now, we give some simple consequences of our main theorem. 

In dimension $n$, Crampon proved that the sum of the positive Lyapunov exponents of the Bowen--Margulis measure is equal to $n-1$, as in the hyperbolic case (see \cite{Cr2}). Similarly, when $n = 3$, that is, for $3$-manifolds, the positive Lyapunov exponents satisfy $\chi_1 + \chi_2 = 2$. Since the spectrum is simple, up to permutation, we have $\chi_1 > 1 > \chi_2$.

Crampon (see \cite{Cr2}) also proved that  the topological entropy $h_{top}$ of the geodesic flow is less or equal to $n-1$ with equality only in the hyperbolic case. Moreover, the topological entropy is achieved for the Bowen--Margulis measure. By the Ledrappier--Young formula, there exist $d_1,\dots, d_{n-1}$ positive real numbers (corresponding the dimensions of the Bowen--Margulis measure along its unstable directions) such that
$$h_{top} = d_1 \chi_1 + \dots + d_{n-1} \chi_{n-1}.$$ 
In the case of non hyperbolic surfaces, since $h_{top}<1$ and the Lyapunov exponent is $1$, one has $d_1 < 1$. Similarly, in the case of non hyperbolic $3$-manifolds, since $h_{top} < 2$, one has either $d_1<1$ or $d_2<1$.
\end{remark} 

\appendix
\section{Lyapunov exponents for some periodic orbits}\label{a.1}
In this Appendix, we calculate in a very explicit way the Lyapunov exponents of the parallel transport along a periodic orbit under mild assumptions. The method is slightly different from (but related to) the one in \cite{Cr2}.

Let  $g$ be an element of  $\Gamma$ and $\gamma_g$ the corresponding periodic orbit.  To understand the projective action of $g$ on $\Gamma$, it is useful to consider the linear action of $g$ on $\mathbb{R}^{n+1}$ together with the cone $\mathcal C \in \mathbb{R}^{n+1}$ with origin $O =(0,\dots,0)$ that defines $\Omega \subset P(\mathbb{R}^{n+1})$.  Just recall that each group element $g \in \Gamma$ preserving $\mathcal C$ is  bi-proximal (after Benoist \cite{B1}). In the sequel, we will assume that $g$ is diagonalizable. We can choose a normal base $ (e_0,\dots, e_j, \dots , e_n) $ on which the matrix associated to $ g$  is block diagonal with corresponding modulus of eigenvalues, possibly repeated with multiplicity,  $ (\lambda_0 > \dots\geq \lambda_i \geq \dots > \lambda_n)$. In these notations $ \mathbb{R}.e_0  = x^+$ and $ \mathbb{R}.e_n =x^-$ are the endpoints of the axis of the periodic geodesic on $\Omega$. The two hyperplans  tangent to the cone $\mathcal C $ are also $g$ invariant and their intersection is thus 
\begin{equation}\label{e.3.15}
 T_{x^+} \mathcal C\cap T_{x^-}\mathcal C = \oplus E_i .
\end{equation}

\begin{figure}[h!]
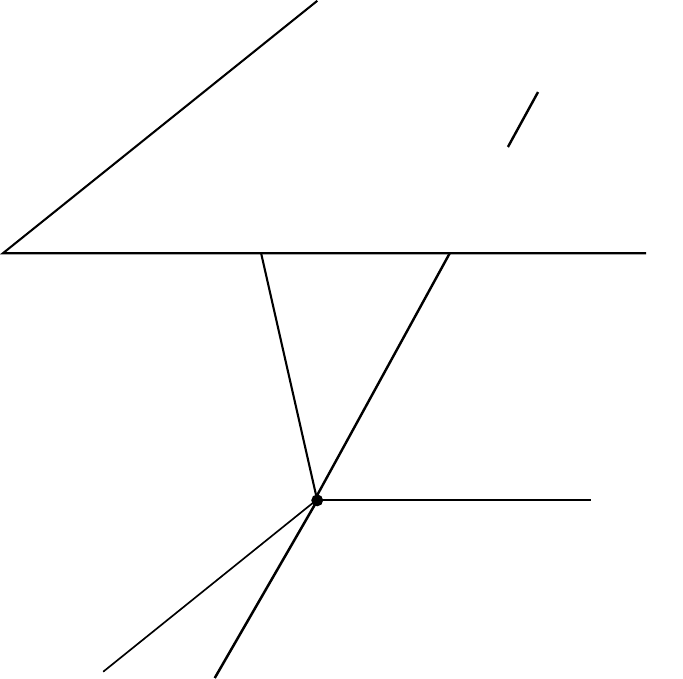\caption{Choice of coordinates adapted to periodic orbits.}
\end{figure}

If, as before, to identify real lines with projective points we choose a section of $\mathcal C$ containing the pair of points $ (x^+ \equiv O+ e_0, \, x^- \equiv O+e_n)$ and such that the two tangent affine  hyperplans at $x^+$ and $x^-$ are parallel, then any point $p \in \Omega $ is of the form 
\begin{align}\label{e.3.16}
 p = O + ue_0 + ve_n + \sum y_i,  && 
 u+v=1, &&
 y_i \in E_i. 
 \end{align}
In $ E_i$, the element $g$ acts as a compound homothetic by an orthogonal transformation $U_i$.\footnote{If the matrix is not diagonalizable, the matrix $U_i$ is not any more a composition of an homothety and an orthogonal transformation, it has to be composed with a unipotent matrix which does not change Lyapunov exponents but changes exact formulas. It is not clear whether non diagonalizable matrices could appear in this situation. }  
 The projective action of the  group element $g$ is thus of the form
  \begin{equation}\label{e.3.17}
  g(u,\dots, y_i,\dots) = \left( \frac {\lambda_0 . u }{\lambda_0. u + \lambda_n .v }, \cdots, 
  \frac{ \lambda_i. U_i. y_i}{\lambda_0. u + \lambda_n .v}, \cdots \right).
  \end{equation}
 Finally, the affine length in the sequel will be the one induced from the Euclidean structure on $\mathbb{R}^{n+1}$.
 \subsubsection{Length of periodic geodesics} 
 From the discussion above, the image of the lift of the periodic geodesic defined by $g$ is the set $\lbrace y_i=0, 1\leq  i\leq n ,  0<u <1 \rbrace$. For any point $ x= (u, 0,\dots,0)$, we get  
 \begin{align}\label{e.3.18}
 \vert x^- -x\vert = u \|e_0- e_n \| &, &  \vert x-x^+\vert = v \|e_0- e_n \| &,&  \vert x^- -x^+\vert=  \|e_0- e_n \|
 \end{align}
 
\begin{proposition}\label{p.3.24} 
The Hilbert length of the closed orbit associated to the group element $g$ is 
$$ L^H(g) = \frac{1}{2} \log \frac {\lambda_0(g)}{\lambda_n(g)}$$
\end{proposition}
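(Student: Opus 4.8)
The plan is to compute the Hilbert distance $d_{\Omega}(x,gx)$ for a base point $x$ on the axis of $g$ and to observe that this quantity \emph{is} the Hilbert length of the closed orbit $\gamma_g$: the axis $\{y_i=0,\ 0<u<1\}$ of $g$ descends to the closed geodesic of $M=\Omega/\Gamma$, the straight segment along it is a Hilbert geodesic (since $\Omega$ is strictly convex), and a single period of $\gamma_g$ corresponds exactly to the segment from $x$ to $gx$. Hence $L^H(g)=d_{\Omega}(x,gx)$, and it remains a direct computation.

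First I would parametrize the axis by $u\in(0,1)$, writing $p(u)=O+ue_0+(1-u)e_n$ as in \eqref{e.3.16}, so that $x^+=p(1)\equiv \mathbb{R}.e_0$ and $x^-=p(0)\equiv\mathbb{R}.e_n$ are the two endpoints on $\partial\Omega$. By \eqref{e.3.18}, for any $p(u)$ on the axis one has $|x^--p(u)|=u\,\|e_0-e_n\|$, $|p(u)-x^+|=(1-u)\,\|e_0-e_n\|$ and $|x^--x^+|=\|e_0-e_n\|$.

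Next I would take $x=p(u)$ and use the explicit projective action \eqref{e.3.17} to write $gx=p(u')$ with $u'=\lambda_0 u/(\lambda_0 u+\lambda_n(1-u))$. Since $g$ is biproximal one has $\lambda_0>\lambda_n$, whence $u<u'$, so the four points $x^-,x,gx,x^+$ occur in this order along the axis; this is precisely the configuration required by the cross-ratio definition of $d_{\Omega}$ in Section \ref{s.preliminaries}. Substituting the affine lengths above, the relevant product becomes $\dfrac{|x^+-x|}{|x^+-gx|}\cdot\dfrac{|gx-x^-|}{|x-x^-|}=\dfrac{1-u}{1-u'}\cdot\dfrac{u'}{u}$; using $u'/(1-u')=\lambda_0 u/(\lambda_n(1-u))$ the factor $\lambda_0 u+\lambda_n(1-u)$ cancels and this product equals $\lambda_0/\lambda_n$, independently of $u$ (as it must, $g$ being an isometry). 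Therefore $d_{\Omega}(x,gx)=\tfrac12\log(\lambda_0/\lambda_n)$, which is the claim.

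I do not expect a genuine obstacle: the computation is routine. The only points deserving care are the bookkeeping of the ordering of $x^-,x,gx,x^+$ on the axis (which is where biproximality $\lambda_0>\lambda_n$, after Benoist, enters) and the identification $L^H(g)=d_{\Omega}(x,gx)$, i.e.\ that the translation length of $g$ along its axis computes the Hilbert length of one period of $\gamma_g$; this is immediate from strict convexity of $\Omega$, which guarantees that the straight segment along the axis realizes $d_{\Omega}$.
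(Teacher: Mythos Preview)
Your proposal is correct and follows essentially the same approach as the paper: pick a point $x$ on the axis, compute $gx$ via the explicit projective action \eqref{e.3.17}, and evaluate the cross-ratio defining $d_{\Omega}(x,gx)$ using the affine lengths \eqref{e.3.18}--\eqref{e.3.20}. Your write-up is in fact slightly more careful than the paper's, since you justify the ordering $x^-,x,gx,x^+$ via $\lambda_0>\lambda_n$ and make explicit the identification $L^H(g)=d_{\Omega}(x,gx)$, but the argument is the same.
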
  
\begin{proof}
Let $x =(u,0,\dots,0)$ be a point of the axis. Using \eqref{e.3.16}, we get 
$$ gx = \left(  \frac {\lambda_0 . u }{\lambda_0. u + \lambda_n .v }, 0,\cdots,0\right)$$
and, thanks to \eqref{e.3.17},  one obtains 
\begin{align}\label{e.3.19}
\vert x^- - gx\vert =  \frac {\lambda_0 . u }{\lambda_0. u + \lambda_n .v } \|e_0- e_n \|  
\end{align}
\begin{align}\label{e.3.20}
\vert gx-x^+\vert =  \frac {\lambda_n . v }{\lambda_0. u + \lambda_n .v } \|e_0- e_n \|.
\end{align}
Since $ L^H(g) =d_{\Omega}( x , gx) = \frac{1}{2} \log \frac { \vert x-x^+\vert \ . \ \vert x^- -gx\vert}{\vert gx-x^+\vert \ . \ \vert x^- -x\vert }$ (by definition), the proof of the desired result is now complete.
\end{proof}

 \subsubsection{Simple calculation of the Lyapunov exponents of a closed orbit associated to a diagonalizable matrix}
  Consider  the action of the geodesic flow along a closed orbit $\gamma_g $ associated to a diagonalizable group element $g\in \Gamma$. To describe it explicitly, we have seen that our task is to understand, given a point $x$ on the closed geodesic $\gamma_g$, the parallel transport at time $T =  L^H(g)$ of a vector $ Z \in T_{x}S_{x^+}(x)$ tangent to the horosphere at $x$ with point at infinity $x^+$. According to Proposition \ref{p.3.21}, it remains to evaluate $g_T Z$ for $g_T= g^{-1}$. This amounts to compute the tangent map of the projective action of $g$ in the coordinates defined above. Similarly, the factor $ f(z,T)$ is determined by Lemma \ref{l.3.14} together with the formulas \eqref{e.3.18}, \eqref{e.3.19}, \eqref{e.3.20}. Using these facts, we will show below that: 
\begin{proposition} \label{pro:paralleltransport}
Let  $g\in \Gamma$ be a diagonalisable matrix and denote its eigenspaces by $E_i$. The Hilbert parallel transport $ \tau^H_T$ along the associated  closed orbit $ \gamma_g$ for time $T=L^H(g)$ preserves a splitting 
$$ THM = \mathbb{R}X \oplus\bigoplus_{i=1}^n T_iHM $$
where   $$ T_iHM= V_iHM \oplus {H_X} (V_iHM), \quad d\pi {H_X} (V_iHM) = E_i.$$
Moreover, for any $X_i \in T_iHM$, one has  
$$ \tau^H_T(X_i) = {\lambda_0^{\frac{1}{2}}}{\lambda_n^{\frac{1}{2}}}. \frac{U_i^{-1}.X_i}{\lambda_i}. $$
\end{proposition}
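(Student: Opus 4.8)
The plan is to reduce the statement to a completely explicit computation in the affine chart adapted to the periodic geodesic of $g$, in the coordinates of \eqref{e.3.16}--\eqref{e.3.17}, using the description of the Hilbert parallel transport obtained in \S\ref{ss.Foulon}. Recall from Proposition \ref{p.3.21} that, once we pass to such a chart, the Hilbert parallel transport of a vertical vector (equivalently, through $H_X$, of a horizontal vector) along a piece of flow orbit is nothing but the Euclidean parallel transport written in the chart, corrected by the scalar $f(z,t)$ of \eqref{e.function-f} and by the unique monodromy element $g_t\in\Gamma$ returning $\varphi_t(z)$ to the fixed fundamental domain $\Delta$. Since $T=L^H(g)$ is exactly the Hilbert length of $\gamma_g$, the time-$T$ flow moves a base point $x$ of the axis to $x_T=gx$; hence the relevant monodromy element is $g_T=g^{-1}$, and $\tau^H_T$ is governed by the differential of the projective action of $g^{-1}$ at $x$ along the $E_i$-directions, weighted by $f((x,[v]),T)$.

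First I would establish the invariant splitting. By \eqref{e.3.15}, the subspace $\bigoplus_i E_i=T_{x^+}\mathcal{C}\cap T_{x^-}\mathcal{C}$ is precisely the common tangent space to the two horospheres through $x$, i.e. $d\pi(H_X(V_xHM))$; decomposing it according to the $g$-invariant eigenspaces $E_i$ and pulling back through $v_X$ and $H_X$ defines $V_iHM$ and $T_iHM=V_iHM\oplus H_X(V_iHM)$ with $d\pi(H_X(V_iHM))=E_i$. Since the Euclidean parallel transport in the chart is the identity on constant vector fields (and $H_X$, $v_X$, the parallelism of $E^\pm$ come from Proposition \ref{p.constant-curvature}) while $g^{-1}$ preserves each $E_i$, the composite map $\tau^H_T$ preserves each $T_iHM$, which yields $THM=\mathbb{R}X\oplus\bigoplus_{i=1}^n T_iHM$.

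Next I would carry out the computation on a single $T_iHM$. Differentiating \eqref{e.3.17} at a point $x=(u,0,\dots,0)$ of the axis — where all $y_i$ vanish, so $\lambda_0 u+\lambda_n v$ is constant to first order in the $E_i$-directions and the cross-terms vanish — one reads off that $dg_x$ restricted to $E_i$ equals $\frac{\lambda_i}{\lambda_0 u+\lambda_n v}U_i$, hence $(dg^{-1})_{gx}|_{E_i}=\frac{\lambda_0 u+\lambda_n v}{\lambda_i}U_i^{-1}$. On the other hand, Lemma \ref{l.3.14} together with \eqref{e.3.18} and the identity $e^{\pm T}=(\lambda_0/\lambda_n)^{\pm 1/2}$ coming from Proposition \ref{p.3.24} evaluates the scalar to $f((x,[v]),T)=\tfrac12\,\frac{\sqrt{\lambda_0\lambda_n}}{\lambda_0 u+\lambda_n v}$. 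Composing the two, the factor $\lambda_0 u+\lambda_n v$ cancels and one is left with $\tau^H_T(X_i)=\lambda_0^{1/2}\lambda_n^{1/2}\,\frac{U_i^{-1}X_i}{\lambda_i}$, after matching the normalization constants (the $\tfrac12$ in the definitions of $f$ and $N$, and the factor $2$ relating $|\cdot|$ to $F$ on vectors of $E^\pm=\{H_X(Y)\pm Y\}$, cf. \eqref{e.norm-choice-1}--\eqref{e.norm-choice-2}). As a sanity check, applying $\tfrac1T\log$ to the operator norm on $T_iHM$ with $T=\tfrac12\log(\lambda_0/\lambda_n)$ recovers the exponents $\eta_i$ of Proposition \ref{pro:exp-periodic-orbit}, the extra $\pm1$ being the $e^{\pm t}$ contribution of the $E^\pm$-part.

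The main obstacle I anticipate is bookkeeping rather than conceptual: pinning down all normalization constants so that the scalar in front of $U_i^{-1}$ comes out exactly $\lambda_0^{1/2}\lambda_n^{1/2}/\lambda_i$. In particular one must check that (i) in the adapted chart the Euclidean parallel transport of a horizontal vector genuinely descends to $\Omega$ and is constant along the geodesic; (ii) it is legitimate to evaluate $f$, the differential of $g$, and the Finsler norm at the single point $x$ of the axis, using that the closed orbit is invariant under $g$; and (iii) the $\tfrac12$ in Proposition \ref{p.3.21} and Lemma \ref{l.3.14} is correctly balanced against the factor $2$ in the norm on $THM$ restricted to $E^\pm$. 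Once these are settled, the proposition follows.
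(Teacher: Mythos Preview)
Your proposal is correct and follows essentially the same route as the paper: identify the invariant splitting via the eigenspaces $E_i$ through \eqref{e.3.15}, compute $dg^{-1}|_{E_i}=\tfrac{\lambda_0 u+\lambda_n v}{\lambda_i}U_i^{-1}$ by differentiating \eqref{e.3.17}, evaluate $f((x,[v]),T)=\tfrac12\,\lambda_0^{1/2}\lambda_n^{1/2}/(\lambda_0 u+\lambda_n v)$ via Lemma \ref{l.3.14} and Proposition \ref{p.3.24}, and multiply. The paper's proof is terser but the ingredients and their assembly are identical to yours.
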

\begin{remark}
In the Riemannian case any $g\in \Gamma$ is such that $ \lambda_0.\lambda_n =1$ and $\lambda_i= 1$ then the parallel transport is an orthogonal transformation and the corresponding Lyapunov  exponents vanish.
\end{remark}
\begin{corollary}
The parallel Lyapunov exponent for $X_i \in T_iHM $ is 
$$ \eta_i = 1 + 2 \frac{\log \frac{\lambda_n}{\lambda_i}}{\log{\frac{\lambda_0}{\lambda_n}}} = -1 + 2 \frac{\log \frac{\lambda_0}{\lambda_i}}{\log{\frac{\lambda_0}{\lambda_n}}}$$
In particular 
$$ -1 <  \eta_i < 1$$ 
\end{corollary}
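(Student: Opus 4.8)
The plan is to read the parallel Lyapunov exponents off directly from the explicit description of the return map $\tau^H_T$ furnished by Proposition \ref{pro:paralleltransport}, together with the length formula of Proposition \ref{p.3.24}; the whole argument is an elementary algebraic simplification.

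First I would note that, since $\gamma_g$ is a periodic orbit of period $T=L^H(g)$, the parallel transport cocycle satisfies $\tau^H_{kT}=(\tau^H_T)^k$ along $\gamma_g$, so the parallel Lyapunov exponent of a nonzero vector $X_i\in T_iHM$ is
$$\eta_i=\lim_{k\to\infty}\frac{1}{kT}\log\|\tau^H_{kT}(X_i)\|$$
for any fixed continuous norm $\|\cdot\|$ on $THM$ (passing from continuous to integer multiples of $T$ is harmless because the cocycle is bounded on the compact orbit). By Proposition \ref{pro:paralleltransport} and linearity, $\tau^H_{kT}(X_i)=(\lambda_0^{1/2}\lambda_n^{1/2}\lambda_i^{-1})^k\,U_i^{-k}X_i$. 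Since $g$ is assumed diagonalisable, $U_i$ is orthogonal, so the operators $U_i^{-k}$ remain in a compact group and $\|U_i^{-k}X_i\|$ stays between two positive constants independent of $k$; hence the orthogonal factor does not affect the limit and
$$\eta_i=\frac{1}{T}\log\left(\lambda_0^{1/2}\lambda_n^{1/2}\lambda_i^{-1}\right)=\frac{\log\lambda_0+\log\lambda_n-2\log\lambda_i}{2T}.$$

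Next I would substitute $T=L^H(g)=\frac{1}{2}\log(\lambda_0/\lambda_n)$ from Proposition \ref{p.3.24}, obtaining
$$\eta_i=\frac{\log\lambda_0+\log\lambda_n-2\log\lambda_i}{\log\lambda_0-\log\lambda_n},$$
and then split the numerator in two ways, as $(\log\lambda_0-\log\lambda_n)+2(\log\lambda_n-\log\lambda_i)$ and as $-(\log\lambda_0-\log\lambda_n)+2(\log\lambda_0-\log\lambda_i)$; these give respectively $\eta_i=1+2\frac{\log(\lambda_n/\lambda_i)}{\log(\lambda_0/\lambda_n)}$ and $\eta_i=-1+2\frac{\log(\lambda_0/\lambda_i)}{\log(\lambda_0/\lambda_n)}$, the two expressions in the statement.

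Finally, for the bound $-1<\eta_i<1$, I would invoke that $g\in\Gamma$ is bi-proximal (Benoist \cite{B1}), so $\lambda_0$ and $\lambda_n$ are simple and $\lambda_0>\lambda_i>\lambda_n>0$ for every intermediate index $i$. This gives $0<\log(\lambda_0/\lambda_i)<\log(\lambda_0/\lambda_n)$, so the ratio in the second expression for $\eta_i$ lies strictly between $0$ and $1$, whence $-1<\eta_i<1$. There is no real obstacle here: the only step needing a word of care is that the orthogonal transformation $U_i$ contributes nothing to the exponential growth rate, which is immediate by compactness in the diagonalisable case; everything else is the elementary manipulation above.
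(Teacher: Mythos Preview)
Your proof is correct and follows precisely the route the paper intends: the corollary is stated without proof in the paper because it is the immediate computation you carry out, reading off the growth rate from the explicit formula of Proposition~\ref{pro:paralleltransport} and inserting the period $T=\tfrac{1}{2}\log(\lambda_0/\lambda_n)$ from Proposition~\ref{p.3.24}. Your handling of the orthogonal factor $U_i$ and the bi-proximality for the strict bounds is exactly what is needed.
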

\begin{proof}[Proof of Proposition \ref{pro:paralleltransport}]
We have identified the tangent space to the horosphere at $x$ to the space $ \oplus E_i$ according to \eqref{e.3.15}. 
Hence, we have just to evaluate $dg^{-1}(Z_i)$: this is done by computing the differential of the map given in \eqref{e.3.17} and taking its inverse at the point $x_T$. In this way, we see that, for any $Z_i$, one has  
$$ dg^{-1}(Z_i)= \frac{\lambda_0. u + \lambda_n .v}{ \lambda_i} U_i ^{-1}. Z_i$$
As it was mentioned before, we can put this information together with \eqref{e.3.18}, \eqref{e.3.19}, \eqref{e.3.20} to get
$$ f((x,[v]), T ) = \frac{1}{2} .  \frac {1}{u. \exp(T) + v . \exp(-T)}.$$ 
In view of Proposition \ref{p.3.24}, we derive that 
$$ f((x,[v]), T ) = \frac{1}{2} . \lambda_n^{\frac{1}{2}} . \lambda_0^{\frac{1}{2}} .  \frac {1}{u. \lambda_0+ v . \lambda_n}.$$ 
This ends the proof of the desired proposition. 
\end{proof}


\end{document}